\documentclass[a4paper]{article}
\usepackage[margin=1in]{geometry}

\title{Disjoint Total Dominating Sets in Near-Triangulations}

\usepackage{authblk}

\author{
    P. Francis\footnote{
        Part of the work was done as a post-doctoral fellow at Indian Institute of Technology Palakkad.
        }
    }
\affil{
    Department of Mathematics\\
    SAS, VIT-AP University, Amaravati, 
    Andhra Pradesh, India.\\
    francis@vitap.ac.in
    }
\author{
    Abraham M. Illickan,
    Lijo M. Jose, and 
    Deepak Rajendraprasad
    }
% \footnote{Supported by Half-Time Research Assistantship (HTRA) funded by Ministry of Education, Government of India}
\affil{
    Department of Computer Science\\
    Indian Institute of Technology Palakkad\\
    111901003@smail.iitpkd.ac.in, 
    112004005@smail.iitpkd.ac.in,
    deepak@iitpkd.ac.in
    }

% Math environments
\usepackage{amsthm} 
    \theoremstyle{plain}
        \newtheorem{theorem}{Theorem}
        \newtheorem{corollary}[theorem]{Corollary}
        \newtheorem{lemma}[theorem]{Lemma}
        \newtheorem{observation}[theorem]{Observation}
        \newtheorem{conjecture}{Conjecture}
    \theoremstyle{definition}
        \newtheorem{definition}[theorem]{Definition}
        \newtheorem{construction}[theorem]{Construction}
    \theoremstyle{remark}
        \newtheorem*{remark}{Remark}
   
% Shorthands
    \usepackage{amssymb}
	    
	    \newcommand{\set}[1]{\left\{ #1 \right\}}
	    \newcommand{\card}[1]{\left\vert #1 \right\vert}
	    \newcommand{\indsg}[1]{\left\langle #1 \right\rangle} % Induced subgraph

\usepackage{hyperref}
	\hypersetup{
	    colorlinks=true,
	    linkcolor=red,
	    citecolor=red,
	    pdfborder={0 0 0}}
	    
\begin{document}
\maketitle
\renewcommand{\baselinestretch}{1.2}\normalsize

\begin{abstract}
    We show that every simple planar near-triangulation with minimum degree at least three contains two disjoint total dominating sets. The class includes all simple planar triangulations other than the triangle. This affirms a conjecture of Goddard and Henning [Thoroughly dispersed colorings, J. Graph Theory, 88 (2018) 174--191].

    \paragraph{Key words}
        Total dominating sets, 
        Total domatic number, 
        Coupon coloring, 
        Planar near-triangulations. 
    \\ \noindent
    \textbf{2010 AMS Subject Classification} 
        05C15, 05C69.
\end{abstract}

\section{Introduction}

All graphs considered in this paper are finite, simple and undirected. A \emph{near-triangulation} is a simple planar graph embedded in the plane such that all its faces except possibly the outer one are bounded by three edges. The most general form of the result we establish in the paper is

\begin{theorem}                                     \label{thm:General}
    Let $G$ be a near-triangulation and $V'$ be a subset of vertices of $G$ containing all the vertices of degree at least three and at most two vertices of degree two. Then, there exists two disjoint subsets $V_1$ and $V_2$ of $V(G)$ such that each vertex $v \in V'$ has at least one neighbor each in $V_1$ and $V_2$.
\end{theorem}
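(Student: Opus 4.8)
The plan is to prove the statement by strong induction on $\card{V(G)}$, after reformulating it as a colouring problem: I seek a map $f \colon V(G) \into \set{1, 2, \star}$ so that every $v \in V'$ has at least one neighbour coloured $1$ and at least one coloured $2$, and then set $V_i = f^{-1}(i)$. The role of the parameter $V'$ together with the allowance of up to two degree-two vertices is precisely to keep this statement closed under the reductions below: each reduction deletes a boundary vertex and may lower the degree of one or two of its neighbours to two, and the two-vertex slack is what lets me keep feeding a legitimate instance to the induction hypothesis. I would first dispose of the small near-triangulations (those with few vertices, and in particular $K_4$ and the small wheels) by exhibiting colourings explicitly; these form the base of the induction.

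For the inductive step I would exploit two structural reductions. First, a degree-two vertex $v$: since $G$ is a near-triangulation, such a $v$ lies on the outer boundary and its two neighbours $u, w$ are adjacent, with $uvw$ a face. Deleting $v$ merges this face into the outer face and leaves a near-triangulation $G - v$ on fewer vertices; I apply induction to $G - v$ with an appropriately chosen $V''$, moving any neighbour whose degree has dropped to two into the degree-two budget, and then try to extend the colouring by choosing colours for $v, u, w$ so that $v$ (if it lies in $V'$) sees both colours among $\set{u, w}$. Second, in the absence of degree-two vertices I would either split $G$ along a separating triangle $T$ into two strictly smaller near-triangulations sharing $T$, colour each by induction, and glue the two colourings along $T$; or, when no separating triangle exists, select a boundary vertex of small degree (one exists by Euler's formula) and perform an analogous delete-and-extend reduction.

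The crux — and the step I expect to be the genuine obstacle — is the extension and gluing itself. The induction hypothesis guarantees that the \emph{neighbours} of the surviving vertices see both colours, but it grants no control over the colours of the particular vertices I must reuse: when I reinsert $v$ I need both $1$ and $2$ to appear among its few neighbours, and nothing in the recursive colouring forces this; symmetrically, two independently produced colourings need not agree on a shared separating triangle $T$. I therefore expect the real work to lie in strengthening the statement so that the colour (membership in $V_1$, $V_2$, or neither) of a bounded set of boundary vertices can be \emph{prescribed} in advance, and then verifying that every reducible configuration admits an extension for each relevant prescription. Making the degree-two budget, the prescribed boundary colours, and the local case analysis at each reinserted vertex all cohere — without ever being forced to reserve a third degree-two vertex — is where the difficulty concentrates, and it is exactly what the carefully chosen form of Theorem~\ref{thm:General} is designed to support.
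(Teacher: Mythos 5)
Your proposal is an outline of an inductive delete-and-extend strategy, and you yourself identify its crux --- controlling the colours of the specific vertices you must reuse when reinserting a deleted vertex or gluing along a separating triangle --- as unresolved. That is a genuine gap, not a technicality: you have not exhibited the strengthened induction hypothesis (prescribing colours on a bounded set of boundary vertices) nor verified that every reducible configuration admits an extension under every prescription, and there is real reason to doubt such a hypothesis exists in a usable form. Prescribing the colours of even a single triangle makes the statement far stronger, and structures like the $3$-sun show that local extension can fail outright; this is essentially why the Goddard--Henning conjecture resisted purely inductive attacks for years. So as written the proposal identifies the obstacle but does not overcome it.

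The paper's proof does use minimal-counterexample reductions of the kind you describe (removing degree-two vertices, boundary edges between high-degree vertices, cut vertices), but only to establish structural facts about a smallest bad example; the heart of the argument is entirely different and sidesteps the extension problem you are stuck on. It builds a maximal independent set $I$ of $4^-$-vertices chosen so that any two neighbours of a vertex $v \in I$ share a second common neighbour; the Key Observation is then that any two unsatisfied neighbours of $v$ must miss the \emph{same} colour, so recolouring $v$ fixes all of them at once. This reduces the task to satisfying only the vertices outside $N(I)$, which is done globally: delete $I$, add ``protected'' triangles on the neighbourhoods of deleted vertices, take a max-cut constrained to keep protected edges inside parts (so every unprotected $5^+$-vertex has three neighbours across the cut), apply the four-colour theorem to each side to get a fair four-colouring, and merge colour pairs into two. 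None of this requires extending or gluing partial colourings, which is precisely the step your plan leaves open. To salvage your approach you would need to either supply the missing extension lemma (and show the two-vertex degree-two budget never overflows across all cases), or adopt a mechanism like the paper's recolouring trick that removes the need for colour control at reinserted vertices.
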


If we place a restriction on the minimum degree, the above result can be stated in the language of total domination. Let $G(V,E)$ be a graph. For $S \subset V$, the \emph{open neighborhood} $N(S)$ of $S$ is the set of all vertices in $G$ which have at least one neighbor in $S$. The set $S$ is called \emph{dominating} if $N(S) \cup S = V$ and \emph{total dominating} if $N(S) = V$. The minimum size of a dominating (resp. total dominating) set is called the \emph{domination number} $\gamma(G)$ (resp. \emph{total domination number} $\gamma_t(G)$) of $G$, while the maximum number of pairwise disjoint dominating (total dominating) sets in $G$ is called the \emph{domatic number} $d(G)$ (\emph{total domatic number} $d_t(G)$) of $G$. A \emph{triangulated disk} (resp. \emph{triangulation}) is a near-triangulation in which the unbounded face is bounded by a simple cycle (resp. triangle). The following two corollaries follow by restricting Theorem~\ref{thm:General} to the respective graph classes.

\begin{corollary}[The Goddard-Henning Conjecture, Conjecture $30$, \cite{god}]      \label{corr:GodHenTriangulation}
    If $G$ is a planar triangulation of order at least four, then $d_t(G) \geq 2$.
\end{corollary}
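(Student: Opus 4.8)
The plan is to obtain the corollary as an almost immediate consequence of Theorem~\ref{thm:General}; all of the real work resides in that theorem. The one preliminary fact I would need is that a planar triangulation $G$ of order at least four has minimum degree at least three. I would establish this directly: if some vertex $v$ had degree at most two, then the (at most two) edges at $v$ bound only triangular faces, and with at most two neighbors $a,b$ the only triangle available at $v$ is $vab$; this forces the two faces meeting at $v$ to coincide, which in a simple embedded graph happens only when $G$ is the triangle itself. Hence for order at least four there are no vertices of degree two (or less), and $\delta(G) \ge 3$.

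Given this, I would apply Theorem~\ref{thm:General} with the choice $V' = V(G)$. A triangulation is in particular a near-triangulation (its unbounded face is bounded by a cycle of three edges), so the theorem is applicable, and $V' = V(G)$ is a legitimate choice of the required subset: it contains every vertex of degree at least three, and, since $\delta(G)\ge 3$, it contains zero vertices of degree two, which is certainly at most two. The theorem then furnishes two disjoint sets $V_1, V_2 \subseteq V(G)$ such that every vertex of $G$ has at least one neighbor in $V_1$ and at least one neighbor in $V_2$.

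The last step is to read this conclusion in the language of total domination. Saying that every vertex of $G$ has a neighbor in $V_1$ is exactly the statement $N(V_1) = V(G)$, so $V_1$ is a total dominating set, and symmetrically $V_2$ is a total dominating set; since $V_1 \cap V_2 = \varnothing$ these are two disjoint total dominating sets, giving $d_t(G) \ge 2$. I do not anticipate any genuine obstacle in this derivation, as all the substance lives in Theorem~\ref{thm:General}; the only points needing care are confirming that $\delta(G) \ge 3$ so that $V'=V(G)$ meets the hypotheses, and being careful to invoke the two-neighbor condition as total domination rather than as ordinary domination.
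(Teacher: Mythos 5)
Your proposal is correct and is exactly the derivation the paper intends: the authors state that the corollary "follows by restricting Theorem~\ref{thm:General}" to triangulations, which amounts to your observation that $\delta(G)\ge 3$ for a simple triangulation of order at least four, taking $V'=V(G)$, and reading the conclusion as two disjoint total dominating sets. You have merely filled in the routine details the paper leaves implicit.
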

\begin{corollary}[Speculated in \cite{god}]                                   \label{corr:GodHenDisc}
    If $G$ is a near-triangulation with minimum degree at least three, then $d_t(G) \geq 2$.
\end{corollary}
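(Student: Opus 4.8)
Since a near-triangulation of minimum degree at least three has no vertex of degree two, in Corollary~\ref{corr:GodHenDisc} one may take the set $V'$ of Theorem~\ref{thm:General} to be all of $V(G)$. The theorem then supplies disjoint $V_1,V_2$ with every vertex having a neighbour in each, so $N(V_1)=N(V_2)=V(G)$ and $V_1,V_2$ are two disjoint total dominating sets, giving $d_t(G)\ge 2$. Hence the corollary is immediate once the theorem is available, and I would put all the effort into the theorem. My plan is induction on $|V(G)|$, thinking of the task as a partial $2$-colouring: colour some vertices $1$ (the class $V_1$) and some $2$ (the class $V_2$), leaving others uncoloured since $V_1,V_2$ need not partition $V(G)$, so that every vertex of $V'$ sees both colours among its neighbours. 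I read the auxiliary set $V'$ and the licence to waive up to two degree-two vertices as devices engineered to make exactly this induction succeed: every reduction that deletes vertices or cuts the disk open tends to expose new low-degree vertices on the boundary, and one needs a small controlled supply of vertices whose requirement may be suspended.

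The reductions I would set up are the ones natural to a plane near-triangulation. An \emph{ear} (a degree-two vertex $v$ with neighbours $a,b$, where $ab$ is necessarily an edge and $v$ lies on the outer face) can be peeled off: delete $v$, solve $G-v$ by induction, and then adjust the colours of $a,b$ and the membership of $v$ so that $v$, if it belongs to $V'$, sees both colours among $\{a,b\}$, using the two waiver slots to absorb any degree-two vertices this deletion newly creates. If the outer boundary carries a chord, or if $G$ has a separating triangle, I would cut $G$ along it into two smaller near-triangulations sharing that edge or triangle, solve each by induction, and glue. In the residual case --- no ear to peel, no boundary chord, and no separating triangle, so that $G$ is in effect $4$-connected --- I would fall back on Euler's formula, which guarantees a vertex of degree at most five, and extend the colouring around one such low-degree vertex directly.

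The step I expect to be the real obstacle is the gluing along a shared edge or separating triangle. The two colourings produced independently for the two pieces must agree on the two or three shared vertices, and each such vertex must see both colours using neighbours that may be split between the pieces. Guaranteeing that this reconciliation is always possible is precisely what should force the strengthened hypothesis: one has to prescribe, or else exempt, the colours on the cut vertices and then budget the two permissible waivers between the two sides. Making that budget close --- never calling for a third waiver, and never stranding a boundary vertex of $V'$ with a monochromatic neighbourhood --- is where I expect the argument to be most delicate, and where the particular constants ``two'' and ``degree two'' in the hypothesis should earn their keep.
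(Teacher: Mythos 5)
Your derivation of the corollary from Theorem~\ref{thm:General} is correct and is exactly what the paper does: with $\delta(G)\ge 3$ there are no $2$-vertices, so $V'=V(G)$, and the two sets $V_1,V_2$ supplied by the theorem are disjoint total dominating sets. That part needs nothing more.

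The plan you then sketch for Theorem~\ref{thm:General} itself, however, is not the route the paper takes, and the step you single out as the obstacle is precisely where such a plan stalls. The paper does begin with a minimal counterexample and does dispose of bridges, cut vertices, ears (degree-two vertices with low-degree neighbours) and certain chords by local reductions much as you describe --- this is where the ``at most two waived $2$-vertices'' really do earn their keep, exactly as you guessed. But it never decomposes along a boundary chord or separating triangle and glues two inductive colourings back together; no budget of two waivers survives that kind of reconciliation in general. Instead, after the local reductions, the argument goes global: it builds a maximal independent set $I$ of $4^-$-vertices chosen so that every pair of neighbours of any $v\in I$ has a second common neighbour; this makes each $v\in I$ a ``free'' vertex that can be recoloured at the very end to satisfy all of its unsatisfied neighbours simultaneously (they all miss the same colour). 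The problem thus reduces to satisfying only the vertices of $I$ and the $5^+$-vertices not dominated by $I$. That is achieved by deleting $I$ (triangulating the holes and marking the resulting triangles as ``protected''), taking a maximum cut constrained to keep protected edges inside the parts --- so every unprotected, hence $5^+$-degree, vertex has at least three neighbours across the cut --- and applying the four-colour theorem to each side to get a four-colouring in which every protected triangle and every such neighbourhood is rainbow; merging the four colours into two in pairs then yields the required two-colouring. So your reduction of the corollary is fine, but the inductive cut-and-glue strategy for the theorem would need to be replaced by (or supplemented with) an argument of this global max-cut plus four-colouring type.
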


Both the above results were speculated by Goddard and Henning~\cite{god} in 2018. They had termed the first one as one of the ``most frustrating'' questions to them and it indeed served as our primary target in this investigation. There were several attempts made on the Goddard-Henning Conjecture which affirmed it on many interesting classes of triangulations. This includes Hamiltonian triangulations (Nagy \cite{nag}), triangulations with all odd-degree vertices, triangulations with a Hamiltonian dual (Goddard and Henning \cite {god}), triangulations with at most two vertices of degree at most four, and triangulations with a $2$-factor in which no cycle has length congruent to $2$ modulo $4$. (B\'{e}rczi and G\'{a}bor \cite{bergab}). Some of these attempts also reformulated the conjecture in various equivalent and slightly stronger ways. The interested reader is invited to check \cite{bergab} for a nice catalog. 

Theorem~\ref{thm:General} is tight in two senses. Firstly we cannot increase the number of degree two vertices in $V'$. The $3$-sun, which is a graph obtained by adding a triangle of chords to a six-cycle, does not have two disjoint dominating sets \cite{god}. Secondly, the result cannot be extended to general graphs as shown by Zelinka \cite{zel2}. He observed that for any positive integer $k$, the incidence graph of the complete $k$-uniform hypergraph $H$ on $n$ vertices with $n \geq 2k-1$ does not have two disjoint total dominating sets even though the minimum degree is high ($k$). But we do not know whether Theorem~\ref{thm:General} can be extended to planar graphs which are not near-triangulations. Goddard and Henning \cite{god}  had also shown that $d_t(G) \leq 4$ for every planar graph $G$ and hence conditions which ensure $d_t(G) \geq 3$ are equally interesting, but we haven't been able to make any progress there yet. Goddard and Henning \cite{god} conjecture that every triangulation $G$ with minimum degree four has $d_t(G) \geq 3$. 

There are at least two other ways in which total domatic number is studied in literature. A $k$-coloring of the vertices of a graph $G$ is called a  \emph{$k$-coupon coloring} if every vertex sees at least one vertex of each color in its open neighborhood \cite{chen}. This is also known as a \emph{total domatic coloring} \cite{francis2021coupon, francis2021domatic} and a \emph{thoroughly dispersed coloring} \cite{god}. The \emph{coupon chromatic number} $\chi_c(G)$ of a graph $G$ is the maximum $k$ for which $G$ has a $k$-coupon coloring. It is easy to see that $d_t(G) = \chi_c(G)$ since every color class in a coupon coloring has to be a total dominating set. In fact, in this paper we present our arguments in the form of vertex coloring (Theorem~\ref{thm:NearCouponColouring}). A hypergraph $H$ has a proper $k$-coloring if there is a $k$-coloring of the vertices of $H$ such that every hyperedge contains all the $k$ colors\footnote{This is not the standard definition. The more common definition is to keep a weaker demand that no hyperedge is monochromatic. The two definitions agree when $k = 2$ and $2$-colorability of hypergraph is also known as \emph{Property B}.}. Given a graph $G(V,E)$, its \emph{open neighborhood hypergraph} is the hypergraph $H$ on the same vertex set $V$ in which the open-neighborhood of each vertex in $G$ is a hyperedge in $H$. Hence a $k$-coupon coloring of $G$ corresponds to a proper $k$-coloring of the open-neighborhood hypergraph of $G$. In particular, Corollary~\ref{corr:GodHenDisc} can be seen as a guarantee that the open-neighborhood hypergraph of a triangulated disk with minimum degree at least three has Property~B.

There is considerable literature on total domination in graphs. See for instance, \cite{bre,grav2,hen2,ho} and a survey of selected topics by Henning \cite{hen1}. The concept of domatic number and total domatic number were introduced by Cockayne et al., in \cite{cok} and \cite{cok1} respectively, and are investigated further as follows. In \cite{zel3}, Zelinka obtained the characterization of $r$-regular bipartite (both directed and undirected) graphs with $d_t(G)=r$. Akbari et al., \cite{akb} provided a criterion for cubic graphs which have total domatic number at least two but the same problem is NP-complete even for bipartite graphs (Heggernes and Telle \cite{heg}). % shown that deciding whether $d_t(G)\geq2$.
Also, Henning and Peterin \cite{hen3} provided a constructive characterization of graphs that have two disjoint total dominating sets. Aram et al., \cite{ara} shown that the total domatic number of a random $r$-regular graph is almost surely at most $r-1$,
and they gave a lower bound on the total domatic number of a graph in terms of order, minimum degree and
maximum degree. Chen et al., \cite{chen} shown that every $r$-regular graph has $d_t(G)\geq (1 -  o(1))r/ \log r$ as $r\rightarrow\infty$, and the proportion of $r$-regular graphs for which $d_t(G) \leq (1 + o(1))r/ \log r$ tends to $1$ as $|V(G)|\rightarrow\infty$. Bouchemakh and Ouatiki \cite{bou} studied the domatic and the total domatic numbers of the $2$-section graph
of the order-interval hypergraph of a finite poset. 
In \cite{koi}, Koivisto et al., showed that it is NP-complete to decide whether $d_t(G)\geq3$ where $G$ is a bipartite planar graph of bounded maximum degree. Also, they have shown that if $G$ is split or $k$-regular graph for $k\geq3$, then it is NP-complete to decide whether $d_t(G) \geq k$.  The first and last author of this paper have studied the domatic and total domatic number of Cartesian product graphs \cite{francis2021coupon, francis2021domatic}. In \cite{MathTar}, Matheson and Tarjan showed that if $G$ is a triangulated disk, then $d(G)\geq 3$ and conjectured that for large enough $n$, every triangulation on $n$ vertices has a dominating set of size at most $n/4$. This conjecture is still open. 

\subsection*{Terminology and notation}

Let $G$ be a graph and $k \in \mathbb{N}$. The vertex-set and edge-set of $G$ are denoted respectively by $V(G)$ and $E(G)$. The subgraph of $G$ induced on a set $S \subset V(G)$ is denoted by $\indsg{S}$. The \emph{degree} $d_G(v)$ of a vertex $v$ in $G$ is $|N_G(v)|$. The subscripts may be omitted when the graph is clear from the context. The minimum and maximum degree of  $G$ are denoted, respectively by $\delta(G)$ and $\Delta(G)$. A vertex of degree exactly $k$, at least $k$ and at most $k$ in $G$ are respectively termed \emph{$k$-vertex}, \emph{$k^+$-vertex} and \emph{$k^-$-vertex}. For a vertex $v$  (resp. edge $e$) of $G$, the subgraph of $G$ obtained by removing $v$ (resp. $e$) from $G$ is denoted by $G \setminus v$ (resp. $G \setminus e$). A \emph{cut} in $G$ is a partition of $V(G)$ into two disjoint subsets. An edge of $G$ with one endpoint in each part of the cut is said to \emph{cross} the cut.

Let $K_n$, $P_n$ and $C_n$, respectively denote the complete graph, the simple path and the simple cycle on $n$ vertices. The graph obtained by adding a universal vertex to $P_n$ (resp. $C_n$) is called the \emph{$n$-fan $F_n$} (resp. \emph{$n$-wheel} $W_n$). A diamond is a $4$-cycle $(v_1, v_2, v_3, v_4)$ together with a chord $v_1v_3$. A $3$-sun is a $6$-cycle $(v_1, v_2, v_3, v_4, v_5, v_6)$ together with three chords forming a triangle $(v_1, v_3, v_5)$.

In a near-triangulation $G$, we refer to the cycle bounding the unbounded face as the \emph{boundary} $B(G)$ of $G$ and the vertices and edges in $B(G)$ as \emph{boundary vertices} and \emph{boundary edges} of $G$. The remaining vertices and edges are called \emph{internal}. An internal edge between two boundary vertices is called a \emph{chord}. 

Given a (partial) $k$-coloring of the vertices of $G$, a vertex $v$ is said to be \emph{satisfied} if $N(v)$ contains at least one vertex of each color. Given a two-coloring of some set $X$, we call the the process of swapping the color of each vertex in $X$ as \emph{flipping} the colors.  

\section{Proof of Theorem~\ref{thm:General}}

In this section, we prove the following theorem which is a restatement of Theorem~\ref{thm:General} in the language of vertex coloring. However this is stronger than Corollary~\ref{corr:GodHenDisc} since we handle all near-triangulations. The strengthening helps us run a proof by structural induction since near-triangulations, unlike triangulated disks, is a family that is closed under deletion of vertices and edges from the boundary. On the other hand, if one restricts to triangulations (Corollary~\ref{corr:GodHenTriangulation}), then the initial observations in this section are unnecessary. We will say more about this simplification after Observation~\ref{obs:CommonSecondneigbor}.

\begin{theorem}                                     \label{thm:NearCouponColouring}
    Let $G$ be a near-triangulation. Let $T$ be the set of all $3^+$-vertices in $G$ and let $S$ be any subset of $2$-vertices in $G$ such that $|S| \leq 2$. There exists a two-coloring of $V(G)$ such that each vertex $v \in T \cup S$ sees both the colors in $N(v)$.
\end{theorem}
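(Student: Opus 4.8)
The plan is to prove Theorem~\ref{thm:NearCouponColouring} by induction on $|V(G)|$, exploiting the fact that near-triangulations are closed under deletion of boundary vertices: if $v$ is a boundary vertex with consecutive neighbours $u_1,\dots,u_d$ (so that $u_1u_2,\dots,u_{d-1}u_d$ are edges of the triangular faces incident to $v$), then $G\setminus v$ is again a near-triangulation whose boundary is obtained by replacing the path $u_1 v u_d$ with $u_1 u_2\cdots u_d$. The two ``free'' degree-two slots in $S$ are precisely the interface data that make such an induction possible, so the whole argument should be organised around never needing to force more than two low-degree vertices to be satisfied in a subinstance.

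First I would dispose of the easy structural reductions. A $2$-vertex $v\notin S$ can simply be deleted: its two neighbours $u,w$ are adjacent, $G\setminus v$ is a smaller near-triangulation, and since $u,w$ (when in $T$) are already required to be satisfied in the subinstance, they remain satisfied after $v$ is restored with an arbitrary colour. Iterating, I may assume every $2$-vertex lies in $S$, so there are at most two of them. Next, a \emph{chord} or a \emph{separating triangle} lets me cut $G$ into two smaller near-triangulations glued along the chord (two shared vertices) or the triangle (three shared vertices). I would colour each part by induction and then reconcile the two colourings on the shared vertices by \emph{flipping} the colours of one part if necessary; since a chord or separating triangle meets few shared vertices, the reconciliation respects the $|S|\le 2$ budget. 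This leaves the core case of a near-triangulation with at most two $2$-vertices, no chord, and no separating triangle, that is, an essentially internally $4$-connected disk.

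For the core case I would peel a single well-chosen vertex $v$ and then restore the colouring. Removing $v$ turns each neighbour $u_i$ of degree exactly three in $G$ into a degree-two vertex of $G'=G\setminus v$; each such $u_i$ either must be placed in $S'$ (consuming budget) or else imposes a constraint on the colour of $v$, namely that $v$ receive the colour missing from the other two neighbours of $u_i$ whenever those two agree. The heart of the argument is therefore a counting or discharging step producing a removable $v$ with at most two degree-three neighbours, together with the observation that the remaining degree-three neighbours cannot make contradictory demands on the colour of $v$. After applying induction to $G'$ and recolouring $v$, the neighbours of $v$ that were satisfied in $G'$ stay satisfied, since adding $v$ only enlarges their neighbourhoods, and $v$ itself is satisfied provided its fan $u_1,\dots,u_d$ is not monochromatic. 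To guarantee the latter I would again use flipping: if the inherited colouring makes $N(v)$ monochromatic, flip a suitable subset, for instance a single boundary neighbour whose own satisfaction survives the flip, so as to introduce the second colour.

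I expect the genuine difficulty to lie entirely in this core case, and specifically in the simultaneous management of two scarce resources: the budget of at most two forced degree-two vertices and the requirement that the peeled vertex see both colours. Guaranteeing a vertex whose removal creates at most two ``expensive'' degree-three neighbours, while also ensuring that the inherited colouring on its fan can be made non-monochromatic by a local flip that unsatisfies no previously satisfied vertex, is where the delicate case analysis, and presumably the preliminary structural facts such as Observation~\ref{obs:CommonSecondneigbor} on shared neighbours of boundary vertices, will be needed. The reductions of the first two paragraphs are routine; all the real difficulty is concentrated in locating a reducible configuration and in verifying that flipping repairs satisfaction globally rather than merely locally.
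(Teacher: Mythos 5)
Your proposal does not follow the paper's route and, more importantly, it leaves the entire core of the argument unproved. The paper does not peel a single vertex and induct on the remainder; it takes a minimal counterexample, builds a carefully chosen maximal independent set $I$ of $4^-$-vertices, shows that every pair of neighbours of a vertex of $I$ has a second common neighbour (so that all unsatisfied neighbours of an $I$-vertex miss the \emph{same} colour and can be repaired simultaneously by recolouring that single vertex, Observation~\ref{obs:Key}), and then produces a colouring satisfying $V(G)\setminus N(I)$ by a global argument: a max-cut constrained to keep ``protected'' edges inside the parts, followed by the four-colour theorem and a merger of four colours into two (Lemma~\ref{lem:fair-coloring}, Construction~\ref{const:f2}). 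Nothing in your outline supplies a substitute for this global step. You explicitly defer ``the delicate case analysis'' of locating a removable vertex with at most two degree-three neighbours and of making its fan non-monochromatic, and that deferred part \emph{is} the theorem; announcing that ``a counting or discharging step'' will produce the needed configuration is not a proof, particularly for a conjecture that resisted precisely this style of local attack for years.

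Two of the steps you call routine also fail as stated. First, the chord and separating-triangle decompositions cannot be reconciled by flipping: with two or three shared vertices, the inductive colourings of the two pieces may agree on one shared vertex and disagree on another, and a single global flip of one side cannot repair both simultaneously; this is why the paper splits only at cut vertices and bridges (Observation~\ref{obs:2Connected}), where a single shared vertex makes one flip sufficient, and otherwise works with the whole $2$-connected disk. Second, your repair of a monochromatic fan $N(v)$ by flipping one boundary neighbour $u$ is unsound: the danger is not whether $u$ itself remains satisfied but whether the \emph{neighbours of} $u$ do, since $u$ may be the unique vertex of its colour in some $N(w)$, and flipping $u$ then unsatisfies $w$. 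The paper avoids exactly this cascade by arranging, via Observation~\ref{obs:CommonSecondneigbor}, that the only vertices ever recoloured form an independent set whose members' neighbours all miss a common colour, so each recolouring strictly enlarges the set of satisfied vertices.
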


Till we complete the proof of Theorem~\ref{thm:NearCouponColouring}, we call near-triangulations which satisfy the theorem as \emph{good} and others as \emph{bad}. Given a near-triangulation $G$ and a subset $S$ of $2$-vertices in $G$, a two-coloring which satisfies all the $3^+$-vertices and the vertices in $S$ is called a \emph{good coloring} of $(G, S)$. The vertices in $S$ will be called \emph{special}. 

For the rest of this section, we fix $G(V,E)$ to be a bad near-triangulation with the smallest $\card{V} + \card{E}$. We also fix $S$ to be an arbitrary subset of $2$-vertices of $G$ such that $\card{S} \leq 2$. We will show that $(G,S)$ has a good coloring, contradicting the existence of $G$. The minimality in the choice of $G$ helps us to simplify the following observations.

\begin{observation}                                 \label{obs:2Connected}
    $G$ is $2$-connected.
\end{observation}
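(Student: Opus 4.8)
The plan is to prove that the minimal bad near-triangulation $G$ is $2$-connected by ruling out the two ways $2$-connectivity can fail: $G$ being disconnected or having a cut vertex. The overarching strategy is the standard minimality argument---if $G$ could be split at a cut vertex (or into components) into smaller near-triangulations, we would color each piece using the fact that smaller near-triangulations are good, and then reconcile the colorings at the shared vertex. So the first step is to handle connectivity: if $G$ were disconnected, each component is a smaller near-triangulation and inherits at most the same special set, so each is good by minimality, and the union of good colorings is a good coloring of $G$, a contradiction. (One should note that an isolated vertex or a single edge has no $3^+$-vertices, and we are allowed at most two special $2$-vertices, so degenerate components cause no trouble.)

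The substantive case is a cut vertex $v$. Here I would write $G$ as the union of two near-triangulations $G_1$ and $G_2$ sharing only $v$, where each $G_i$ has fewer vertices and edges than $G$. The key subtlety is that $v$ may have degree two in some $G_i$ even though it has higher degree in $G$, and more importantly the special set $S$ must be distributed carefully between the two pieces. The natural move is to color each $(G_i, S_i)$ for an appropriate choice of $S_i \subseteq S$, treating $v$ as special in whichever piece(s) it needs to be satisfied, and then---since the two colorings agree only up to a global color swap---to flip the colors of one piece if necessary so that the colorings are compatible at $v$. Because flipping preserves the property of being a good coloring, and because $v$'s neighborhood in $G$ is the disjoint union of its neighborhoods in $G_1$ and $G_2$, I expect that after aligning at $v$ one can arrange that $v$ sees both colors (using neighbors on both sides) and every other vertex, lying entirely within one piece, remains satisfied.

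The main obstacle I anticipate is the bookkeeping around degree-two vertices and the cardinality constraint $|S| \le 2$. When $v$ is a cut vertex, in each piece $G_i$ the vertex $v$ might become a $2$-vertex, which means that to force $v$ to be satisfied we may have to add it to the special set of that piece; combined with the original special vertices of $S$ that fall into $G_i$, this could threaten to exceed the budget of two special $2$-vertices. One must check that $S$ splits across the two pieces so that neither piece is asked to carry more than two special vertices, and that $v$ can be satisfied in $G$ by combining partial guarantees from the two sides rather than demanding full satisfaction within a single piece. I would resolve this by observing that $v$ need only see \emph{one} color from each side to see both colors overall, so it suffices to ensure $v$ sees at least one vertex of an appropriate color in each $G_i$, which is a weaker demand than making $v$ special in both pieces simultaneously; careful case analysis on how many special vertices lie in each piece and on the degree of $v$ within each piece should close the argument.
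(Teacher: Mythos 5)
Your disconnected case and the overall shape of your cut-vertex argument (split $G$ into $G_1$ and $G_2$ sharing only $v$, distribute $S$, color each piece by minimality, flip one piece to reconcile at $v$) match the paper. But your proposed resolution of the main difficulty does not close, and you are missing a preparatory step that the paper needs precisely in order to make it close. The single flip of (say) the $G_2$-coloring is already spent on making the two colorings agree on the color of $v$ itself; once that is fixed you have no remaining freedom to control \emph{which} colors appear among $v$'s neighbors in $G_2$. So your plan of letting $v$ ``see one color from each side''---combining partial guarantees from the two pieces---is not achievable by this mechanism: a good coloring of $(G_i, S_i)$ promises nothing about the colors in $N_{G_i}(v)$ unless $v$ is a $3^+$-vertex of $G_i$ or $v \in S_i$. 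The working argument (the paper's) satisfies $v$ entirely within \emph{one} piece: since $v \notin S$, the set $S$ splits between the pieces, so some piece, say $G_2$, has $\card{S_2} \leq 1$ and can absorb $v$ into its special set when $d_{G_2}(v) = 2$; if $d_{G_2}(v) \geq 3$ then $v$ is satisfied there for free. The flip is then used only for alignment at $v$.

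This is where the missing step bites: the above requires $d_{G_i}(v) \geq 2$ in both pieces (a special vertex must be a $2$-vertex), and that can fail---exactly when an edge incident to $v$ is a bridge, for instance when the piece carrying the smaller special set meets $v$ in a single edge while the other piece has $d_{G_i}(v)=2$ and already carries two special vertices, or when $v$ is itself a degree-two cut vertex lying in $S$ with one neighbor on each side. The paper therefore first rules out bridges by a separate argument: delete the bridge $e = uv$ and color the two components of $G \setminus e$ independently; since the components share no vertex, the flip is free to be used to satisfy the endpoints $u$ and $v$ across the deleted edge, and $v$ can be added to the special set of its component when its degree there drops to two. Only after bridges are excluded does every cut-vertex decomposition satisfy $d_{G_i}(v) \geq 2$, at which point the case analysis you sketch can be completed. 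Without the bridge case, your argument has configurations it cannot color.
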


\begin{proof}
    If $G$ is disconnected, then each component of $G$ is smaller than $G$ and hence good. In this case, $G$ is easily seen to be good. 

    Suppose $G$ contains a bridge $e=uv$ and let $G_u$ and $G_v$ be the two components of $G \setminus e$. For $x \in \{u, v\}$, let $S_x = S \cap V(G_x)$. Without loss of generality we can assume that $\card{S_v} \leq 1$, and let $S'_v = S_v \cup \set{v}$, if $d_{G_v}(v)=2$ and $S'_v = S_v$ otherwise. By the minimality of $G$, both $G_u$ and $G_v$ are good. If $d_G(u)>3$ or $d_G(u)=1$  then a good coloring of $(G_u, S_u)$ together with a good coloring of $(G_v, S'_v)$ will be a good coloring of $(G,S)$. If $d_G(u) \in \{2,3\}$ then the above procedure will still give a good coloring of $(G,S)$ provided we flip the coloring of $G_v$ if $u$ does not see both colors in the first coloring.

    Suppose $G$ contains a cut vertex $v$. We can consider $G$ as two smaller graphs $G_1$ and $G_2$ which share exactly one common vertex $v$. For $i \in \{1, 2\}$, let $S_i = S \cap V(G_i)$ and let $d_i = d_{G_i}(v)$. Since $G$ is bridgeless, $d_i \geq 2$. Without loss of generality we can assume $\card{S_2} \leq 1$ and let $S'_2 = S_2 \cup \set{v}$ if $d_2 = 2$ and $S'_2 = S_2$ otherwise. A good coloring of $(G_1, S_1)$ can be combined with a good coloring of $(G_2, S'_2)$, flipping the coloring of $G_2$ if necessary to match the color of $v$ in both colorings, to obtain a good coloring of $(G,S)$.
\end{proof}
 
\begin{remark}
    Insisting that a good coloring should satisfy a set $S$ of $2$-vertices along with all the $3^+$-vertices not just strengthened Theorem~\ref{thm:NearCouponColouring} marginally, but also helped us critically in establishing Observation~\ref{obs:2Connected}. In fact, we could not bypass this strengthening even if we restricted to $2$-connected near-triangulations (triangulated disks) since we need closure of the graph class under deletion of boundary vertices and edges for some of the further observations (c.f. Observation~\ref{obs:NoConsecutive4}) too. 
\end{remark} 
 
Since $G$ is a $2$-connected near-triangulation, it is a triangulated disk. The only triangulated disks on at most $4$ vertices are $K_3$, $K_4$ and the diamond. We can easily verify that all three of them are good. Henceforth we assume that $G$ is a triangulated disk with at least $5$ vertices.

\begin{observation}                     \label{obs:NoConsecutive4}
       There are no consecutive $4^+$-vertices on the boundary of $G$.
\end{observation}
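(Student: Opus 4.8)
The plan is a reducibility argument that leans on the minimality of $G$ together with the closure of near-triangulations under deletion of boundary edges (the very reason, noted in the excerpt, for working with near-triangulations rather than triangulated disks). Suppose for contradiction that $u$ and $v$ are consecutive $4^+$-vertices on $B(G)$, so that $uv$ is a boundary edge with $d_G(u), d_G(v) \geq 4$. First I would form $G' = G \setminus uv$. Since $uv$ is a boundary edge of the triangulated disk $G$, it is incident to exactly one internal face, a triangle $uvw$; deleting $uv$ merges that triangle with the outer face and leaves every other bounded face a triangle, so $G'$ is again a near-triangulation. Crucially, $G'$ has one fewer edge and the same vertices, so $\card{V(G')} + \card{E(G')} < \card{V(G)} + \card{E(G)}$, and hence $G'$ is good by the minimality of $G$.

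Next I would verify that $(G', S)$ is a legitimate instance. Deleting $uv$ alters only the degrees of $u$ and $v$, each dropping by exactly one; since both were $4^+$-vertices, they survive as $3^+$-vertices in $G'$. Thus no new $2$-vertex is created, the set of $3^+$-vertices of $G'$ remains $T$, and every vertex of $S$ (a $2$-vertex of $G$, hence distinct from $u$ and $v$) is still a $2$-vertex of $G'$ with $\card{S} \leq 2$. By goodness of $G'$, there is a two-coloring $c$ of $V(G') = V(G)$ that satisfies every vertex of $T \cup S$ with respect to the neighborhoods in $G'$.

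Finally I would argue that this same coloring $c$ is good for $(G, S)$. For every vertex $x \notin \set{u, v}$ we have $N_G(x) = N_{G'}(x)$, so $x$ stays satisfied. For $u$ we have $N_G(u) = N_{G'}(u) \cup \set{v} \supseteq N_{G'}(u)$, and since $u$ already saw both colors inside $N_{G'}(u)$ it still sees both inside the larger set $N_G(u)$; the same reasoning applies to $v$. Hence $c$ satisfies all of $T \cup S$ in $G$, so $(G,S)$ has a good coloring, contradicting that $G$ is bad.

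The argument is short, and the one point that needs genuine care is the first step: one must confirm that deleting a boundary edge really does keep us inside the class of near-triangulations even when the apex $w$ of the triangle $uvw$ lies on $B(G)$, so that the new outer boundary is no longer a simple cycle and $G'$ need not be a triangulated disk. This is exactly where the flexibility of the full near-triangulation family is used, and it also explains why the statement is phrased for $4^+$-vertices rather than $3^+$-vertices: demanding degree at least four is precisely what guarantees that $u$ and $v$ remain $3^+$-vertices in $G'$ instead of collapsing into $2$-vertices that would then have to be forced into the size-limited set $S$.
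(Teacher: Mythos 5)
Your proof is correct and is exactly the paper's argument: delete the boundary edge $uv$, invoke minimality of $G$ to get a good coloring of $(G\setminus uv, S)$, and observe that it remains good for $(G,S)$ since $u$ and $v$ stay $3^+$-vertices and neighborhoods only grow. The paper states this in two sentences; you have simply filled in the (correct) details.
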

\begin{proof}
    Let $e$ be the boundary edge between such a pair of consecutive vertices. A good coloring of $(G \setminus e, S)$, which exists by the minimality of $G$, will be a also a good coloring of $(G, S)$. 
\end{proof}

\begin{observation}                     \label{obs:NoConsecutive23-}
    No $2$-vertex in $G$ has a $3^-$-neighbor.
\end{observation}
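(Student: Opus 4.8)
The plan is to argue by contradiction using the minimality of $G$, together with the structural facts already established: $G$ is a $2$-connected triangulated disk (Observation~\ref{obs:2Connected}) on at least five vertices. Suppose some $2$-vertex $v$ has a neighbor $u$ with $d(u)\le 3$, and let $w$ be the other neighbor of $v$. Since $v$ is a $2$-vertex, its two incident edges are boundary edges and the inner face between them is a triangle, so $uw\in E(G)$ and $u,v,w$ bound a triangle. First I would dispose of the possibility $d(u)=2$: then $N(u)=\set{v,w}$ and $N(v)=\set{u,w}$, so both $u$ and $v$ send all their edges into $\set{u,v,w}$. Picking any $x\notin\set{u,v,w}$ (which exists as $\card{V}\ge 5$), the two internally disjoint $x$--$u$ paths guaranteed by $2$-connectivity must each enter $u$ through $v$ or through $w$; but the path entering through $v$ is forced to pass through $w$ as well, since $N(v)=\set{u,w}$, so both paths meet at $w$, contradicting disjointness. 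Hence $d(u)=3$.

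For $d(u)=3$, let $z$ be the third neighbor of $u$. Tracing the faces around the boundary vertex $u$ (whose boundary edges are $uv$ and, as one checks, $uz$, with $uw$ a chord), the two inner faces at $u$ are the triangles $uvw$ and $uwz$, so $wz\in E(G)$; in particular $w$ is adjacent to $u,v,z$ and thus $d(w)\ge 3$. Now I would delete $v$: the triangle $uvw$ merges into the outer face, so $G'=G\setminus v$ is a strictly smaller near-triangulation and hence \emph{good} by minimality. In $G'$ we have $d_{G'}(u)=2$, $d_{G'}(w)=d_G(w)-1$, and every other vertex keeps its neighborhood. The strategy is to take a good coloring of $(G',S')$ for a suitable $S'$ and extend it to $v$, so that all $3^+$-vertices of $G$ (whose satisfaction, for vertices other than $u,w$, is inherited verbatim from $G'$) and all of $S$ end up satisfied.

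If $v\notin S$, I would color $(G',S)$; here $\card{S}\le 2$ is respected and the only vertices of $G$ whose satisfaction is not automatically inherited are $u$ and $w$, whose neighborhoods have gained $v$. It then suffices to choose $c(v)$ well: $u$ is satisfied unless $c(w)=c(z)$, in which case we need $c(v)\ne c(w)$; and when $d_G(w)=3$ (so $N_G(w)=\set{u,v,z}$) the vertex $w$ imposes an analogous demand on $c(v)$ via $\set{c(u),c(z)}$. A short case check shows a single value of $c(v)$ meets both demands, because the two colors that $c(v)$ would be forbidden to take cannot coincide: that would force $c(z)$ to equal both colors at once.

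The hard part will be the case $v\in S$, where we additionally need $v$ itself satisfied, i.e.\ $c(u)\ne c(w)$ — a constraint on two \emph{adjacent} vertices that an arbitrary good coloring need not respect, and which $c(v)$ cannot repair since $N(v)=\set{u,w}$. The key trick is that having $v\in S$ frees the special-vertex budget, as $\card{S\setminus\set{v}}\le 1$, so I can afford to make $u$ special in $G'$: color $(G',(S\setminus\set{v})\cup\set{u})$, which is admissible since $d_{G'}(u)=2$ and $\card{(S\setminus\set{v})\cup\set{u}}\le 2$. This forces $c(w)\ne c(z)$, whence $u$ is satisfied in $G$ regardless of its own color; I am then free to recolor $u$ to equal $c(z)$ and set $c(v)=c(w)$. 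One verifies directly that $v$, $w$, and $z$ are all satisfied: $v$ sees the two colors $c(u)\ne c(w)$, while $w$ and $z$ each remain satisfied because each is adjacent to both $u$ and $w$, which now carry the two different colors. I expect this budgeting argument for the special case to be the crux; everything else is routine case analysis.
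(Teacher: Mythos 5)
Your argument is correct, but it takes a different route from the paper's. You delete only the $2$-vertex $v$ and then repair the damage at its two neighbors, splitting into cases on whether $v\in S$ and, in the harder case, spending the freed special-vertex budget on $u$ (making it special in $G\setminus v$ to force $c(w)\neq c(z)$) before a final local recoloring of $u$. The paper instead deletes $v$ \emph{together with} its degree-$3$ neighbor $u$: it first notes that $\set{u,v,w,z}$ induces a diamond with chord $uw$, that $uz$ is therefore a boundary edge, and hence that $H=G\setminus\set{v,u}$ is again a near-triangulation; it then takes a good coloring of $(H,S\setminus\set{v})$ and extends it by giving $v$ the color opposite to $z$'s and $u$ the color opposite to $w$'s. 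That single assignment satisfies everything at once --- $v$ and $z$ each see the differing pair $\set{u,w}$, while $u$ and $w$ each see the differing pair $\set{v,z}$ --- so no case distinction on $S$ and no recoloring are needed. Your version proves the same thing and is a nice early illustration of the budgeting trick the paper itself uses in Observation~\ref{obs:2Connected}, but the two-vertex deletion is the shorter path. Two cosmetic slips worth fixing: the clause ``the two colors that $c(v)$ would be forbidden to take cannot coincide'' says the opposite of what your parenthetical about $c(z)$ actually proves (you want that the two forbidden colors cannot be \emph{distinct}); and in the final verification ``each is adjacent to both $u$ and $w$'' should read that $w$ is adjacent to $u$ and $v$, while $z$ is adjacent to $u$ and $w$.
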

\begin{proof}
    Let $v$ be a $2$-vertex in $G$ with neighbors $u$ and $w$. Since $G$ is a simple triangulated disk on at least $5$ vertices, all the three vertices $u$, $v$ and $w$ are on the boundary of $G$ and there exists an edge $uw$ which is an internal in $G$.  Hence $\set{u, w}$ has a second common neighbor $x$ and $\indsg{\set{u,v,w,x}}$ is a diamond with $uw$ as the chord. In particular, both $u$ and $w$ are $3^+$-vertices. Let one of them, say $w$, be a $3$-vertex. Then the edge $wx$ is also a boundary edge of $G$ and hence $H = G \setminus \set{v, w}$ is also a near-triangulation. We can extend a good coloring of $(H, S \setminus \set{v})$ to a good coloring of $(G, S)$ by giving $v$ the color different from that of $x$ and $w$ the color different from that of $u$.
\end{proof}

\begin{observation}                                 \label{obs:No2vertexOutsideS}
        $G$ has no $2$-vertices outside $S$.
\end{observation}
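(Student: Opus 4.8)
The plan is to argue by contradiction: suppose $G$ has a $2$-vertex $v$ lying outside $S$, and then produce a good coloring of $(G,S)$, contradicting the badness of $G$. The guiding idea is that, since $v \notin S$, nothing forces us to satisfy $v$ itself, so the natural move is to delete it and invoke the minimality of $G$.

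First I would record the local structure around $v$, just as in the proof of Observation~\ref{obs:NoConsecutive23-}. As $G$ is a triangulated disk on at least five vertices, $v$ is a boundary vertex; let $u$ and $w$ be its two neighbors. The triangle $uvw$ is the unique bounded face incident to $v$, and the edge $uw$ is a chord. By Observation~\ref{obs:NoConsecutive23-}, neither $u$ nor $w$ can be a $3^-$-vertex, so both $u$ and $w$ are $4^+$-vertices of $G$. This degree guarantee is the crux of the whole argument.

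Next I would delete $v$ and set $H = G \setminus v$. Removing the $2$-vertex $v$ deletes the boundary edges $uv$ and $vw$ and merges the face $uvw$ into the outer face, turning the former chord $uw$ into a boundary edge; hence $H$ is again a near-triangulation, with $\card{V(H)} + \card{E(H)} < \card{V} + \card{E}$. By the minimality of $G$, and since $v \notin S$ gives $S \subseteq V(H)$, the pair $(H,S)$ is good and admits a good coloring $c$. Because $d_G(u), d_G(w) \geq 4$, we have $d_H(u), d_H(w) \geq 3$, so $u$ and $w$ are $3^+$-vertices of $H$ and are therefore already satisfied by $c$.

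Finally I would extend $c$ to $G$ by assigning $v$ an arbitrary color. Deleting $v$ altered only the open neighborhoods of $u$ and $w$, so every $3^+$-vertex of $G$ other than $u,w$, and every vertex of $S$, keeps its $H$-neighborhood and stays satisfied; the vertices $u$ and $w$ were satisfied in $H$ and merely regain the neighbor $v$, so they remain satisfied; and $v \notin T \cup S$ imposes no demand at all. Thus the extension is a good coloring of $(G,S)$, giving the desired contradiction. I expect the only point that needs genuine care to be the verification that $H$ really is a near-triangulation after the deletion, which is precisely where the chord $uw$ surfacing on the boundary is used; once that is in hand, the degree bookkeeping for $u$ and $w$ is immediate from Observation~\ref{obs:NoConsecutive23-}, and the rest is routine.
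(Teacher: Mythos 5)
Your proposal is correct and follows essentially the same route as the paper: both delete the $2$-vertex $v \notin S$, invoke minimality to color $(G\setminus v, S)$, and use Observation~\ref{obs:NoConsecutive23-} to ensure the neighbors $u,w$ are $4^+$-vertices and hence remain satisfied $3^+$-vertices after the deletion. Your additional verification that $G\setminus v$ stays a near-triangulation is a sound (if implicit in the paper) piece of bookkeeping.
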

\begin{proof}
    Suppose $G$ has a $2$-degree vertex $v \notin S$ and let $u$ and $w$ be its neighbors. By Observation~\ref{obs:NoConsecutive23-}, both $u$ and $w$ are $4^+$-vertices. Hence any good coloring of $(G \setminus \set{v}, S)$ can be extended to a good coloring of $(G, S)$ by giving any color to $v$. Notice that we do not need to satisfy $v$ since it is a $2$-vertex outside $S$.
\end{proof}

Due to Observation~\ref{obs:No2vertexOutsideS}, we do not need to specify $S$ separately anymore. $S$ will be the set of all $2$-vertices in the triangulated disk $G$. Moreover any good coloring of $(G,S)$ is an ordinary $2$-coupon coloring of $G$ since it satisfies all the vertices of $G$.

\begin{observation}                                 \label{obs:No3-3Chord}
       There is no chord between two $3$-vertices on the boundary of $G$.
\end{observation}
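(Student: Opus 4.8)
The plan is to proceed by contradiction in the same spirit as the previous observations: suppose there is a chord $uw$ joining two $3$-vertices on the boundary of $G$. Since $G$ is a triangulated disk on at least $5$ vertices and $uw$ is an internal edge between boundary vertices, the chord $uw$ lies on exactly two triangular faces, giving two common neighbors of $u$ and $w$, say $x$ (on the ``outer'' side, i.e., between $u$ and $w$ along the boundary) and $y$ (on the inner side). Because both $u$ and $w$ have degree exactly $3$, their neighborhoods are forced: $N(u) = \{w, x, y\}$ and $N(w) = \{u, x, y\}$. My first step is to write down this local structure explicitly and observe that $\{u, w, x, y\}$ induces a diamond with $uw$ as the chord, with $x$ a boundary vertex adjacent to both $u$ and $w$.

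Next I would delete the offending part of the boundary to obtain a smaller near-triangulation and invoke minimality. The natural candidate is $H = G \setminus \{u, w\}$; I would check that $H$ is again a near-triangulation (removing the two degree-$3$ vertices $u, w$ together with the triangle they sit on should leave the remaining graph with all bounded faces triangular, since the only faces destroyed are those incident to $u$ or $w$, and the new boundary runs $\ldots\, x \ldots y \ldots$ through the former second-neighbors). By the minimality of $G$, $(H, S \setminus \{u,w\})$ has a good coloring. The heart of the argument is then to extend this good coloring back to $u$ and $w$. Each of $u$ and $w$ has exactly the three neighbors $x, y$ and the other of the pair; after coloring $H$ the vertices $x$ and $y$ already carry fixed colors, so I have freedom only in the colors assigned to $u$ and $w$ themselves, and I must ensure both $u$ and $w$ see both colors.

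The key case split is on whether $x$ and $y$ receive the same color in the good coloring of $H$. If $x$ and $y$ get different colors, then each of $u,w$ already sees both colors among $\{x,y\}$, so I may color $u$ and $w$ arbitrarily (say to also satisfy any third party, but since their only neighbors outside $\{u,w\}$ are $x,y$, no one else depends on them). If $x$ and $y$ get the \emph{same} color, then to satisfy $u$ I must give one of its neighbors the opposite color; the only adjustable neighbor is $w$, so I color $w$ the opposite color to $x$, and symmetrically this then forces me to check $w$ is satisfied via $u$. Concretely, assigning $u$ and $w$ the color opposite to the common color of $x,y$ makes each of $u, w$ see the opposite color at $x$ (or $y$) and — here is the subtlety — I must double check each of $u,w$ also sees its \emph{own} color among its neighbors; since $u$ sees $w$ and they share the assigned color, I instead color $u$ and $w$ with \emph{different} colors, one matching $x{=}y$ and one opposite, so that each sees both. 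I expect this small coloring puzzle on four vertices to resolve cleanly in every case.

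The main obstacle I anticipate is not the extension step but verifying that $H = G \setminus \{u,w\}$ is genuinely a near-triangulation and that deleting $u,w$ does not collide with the special set or create a degree-$1$ or isolated vertex among $x, y$. In particular I would need the second common neighbor $y$ to remain a $3^+$-vertex (or be absorbed correctly into the boundary) and to confirm that $x$, now on the new boundary, does not become problematic. A further point to settle is the possibility that $x$ or $y$ coincides with each other's roles or that $u$ and $w$ share \emph{more} than two common neighbors, which the degree-$3$ constraint rules out; making this exhaustive while keeping the local picture a clean diamond is where most of the care will be required. Once the structure of $H$ is pinned down, the recoloring of $\{u,w\}$ is routine, so I would budget the bulk of the writing to the structural verification rather than to the two-vertex coloring at the end.
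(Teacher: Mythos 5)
Your local analysis stops one step short of the punchline, and the deletion argument you build instead has real holes. Since $u$ and $w$ are boundary vertices of a $2$-connected triangulated disk, each has exactly two incident boundary edges; as the chord $uw$ is their only internal incident edge and $\deg(u)=\deg(w)=3$, \emph{all four} edges $ux$, $uy$, $wx$, $wy$ must be boundary edges. Hence the boundary cycle is exactly the $4$-cycle $(x,u,y,w)$, and since the two faces containing $uw$ are the triangles $uwx$ and $uwy$, the graph $G$ is precisely the diamond on four vertices --- contradicting the standing assumption that $G$ has at least $5$ vertices. That is the paper's entire proof. Your picture with $y$ ``on the inner side'' is already inconsistent with this: $y$ cannot be internal, because $uy$ and $wy$ are forced to be boundary edges. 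There is no larger graph around the configuration to delete vertices from and no induction to run.

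Even taken on its own terms, the extension step is flawed. You assert that after coloring $H = G\setminus\{u,w\}$ ``no one else depends on'' the colors of $u$ and $w$, but $x$ and $y$ are each adjacent to both $u$ and $w$ and lose two neighbors upon deletion; if either has degree $3$ or $4$ in $G$ it may become a $1$- or $2$-vertex in $H$, so the good coloring of $H$ need not satisfy it, and you would have to satisfy $x$ and $y$ through the colors of $u$ and $w$ on top of satisfying $u$ and $w$ themselves --- constraints you never reconcile. Your resolution of the ``$x$ and $y$ share a color $c$'' case is also garbled: giving $u$ and $w$ different colors with one equal to $c$ leaves that vertex seeing only $c$ in its neighborhood, whereas giving both $u$ and $w$ the color $\bar{c}$ does work for $u,w$ alone (a vertex need not see its own color, only both colors among its neighbors). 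None of this repair work is needed once you observe that the configuration forces $G$ to be the diamond.
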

\begin{proof}
    Let $uv$ be a chord between two $3$-degree vertices $u$ and $v$ on the boundary of $G$. As $uv$ is an internal edge, it will be a part of two triangular faces $\indsg{\set{u,v,w_1}}$ and $\indsg{\set{u,v,w_2}}$. Since $u$ and $v$ are $3$-vertices, $w_1$ and $w_2$ are their only neighbors other than each other. In this case all the four edges of the cycle $C = (u, w_1, v, w_2, u)$ are on the boundary of $G$. Since $G$ is a triangulated disk,  (Observation~\ref{obs:2Connected}), $C$ is the entire boundary of $G$, and hence $G$ is a diamond. This contradicts our assumption that $G$ had at least $5$ vertices.
\end{proof}

Next we construct a special independent set $I$ in $G$ as follows.

\begin{construction}[$I$]                           \label{const:I}
    Start with $I = \emptyset$. In Round~1, for each $4^+$-vertex on the boundary $B(G)$ of $G$, we add its clockwise next boundary vertex to $I$. In Round~2, we enlarge $I$ to a maximal independent set of $3^-$-vertices from $B(G)$. In Round~3, we enlarge $I$ to a maximal independent set of $4^-$-vertices from $G$.
\end{construction}

\begin{observation}                                 \label{obs:Ivertices}
    $I$ is a maximal independent set of $4^-$-vertices in $G$ which contains all the $2$-vertices and none of the $4^+$-vertices from the boundary of $G$.
\end{observation}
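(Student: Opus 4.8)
The plan is to verify the three asserted properties in turn, tracking what each round of Construction~\ref{const:I} contributes. The crucial point is that the vertices added in Round~1 already block every $4^+$-boundary vertex from ever entering $I$, which drives both the independence bookkeeping and the ``avoids $4^+$-boundary vertices'' claim.

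First I would show that the set produced in Round~1 is an independent set consisting only of $3^-$-vertices. Each vertex added in this round is the clockwise-next boundary vertex $v$ of some $4^+$-boundary vertex $u$; by Observation~\ref{obs:NoConsecutive4} such a $v$ cannot itself be a $4^+$-vertex, so it is a $3^-$-vertex. For independence, suppose two added vertices $v_1, v_2$ were adjacent. Being boundary vertices, they would be joined either by a boundary edge or by a chord. A chord is impossible: a $2$-vertex has both of its edges on the boundary and hence no chord, while a chord between two $3$-vertices is ruled out by Observation~\ref{obs:No3-3Chord}. If $v_1 v_2$ were a boundary edge, say with $v_2$ the clockwise-next of $v_1$, then the $4^+$-vertex whose clockwise-next is $v_2$ would have to be $v_1$ itself, contradicting that $v_1$ is a $3^-$-vertex. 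Hence Round~1 yields an independent set of $3^-$-vertices, so the subsequent ``enlarge to a maximal independent set'' steps are well defined; since they only add vertices while preserving independence, the final $I$ is independent, all of its members are $4^-$-vertices (Rounds~1 and~2 add $3^-$-vertices, Round~3 adds $4^-$-vertices), and maximality among $4^-$-vertices is exactly what Round~3 guarantees. This settles the first assertion.

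Next, for the claim that $I$ avoids every $4^+$-boundary vertex, I would invoke the blocking fact: after Round~1 each $4^+$-boundary vertex $u$ has its clockwise-next boundary neighbor $v$ in $I$, and $uv$ is a boundary edge, so $u$ already has a neighbor in $I$ and can never be added in Rounds~2 or~3 without destroying independence. Since Rounds~1 and~2 add only $3^-$-vertices, the sole opportunity to add a $4^+$-vertex would arise in Round~3 (a $4$-vertex is a $4^-$-vertex), and this is precisely the opportunity the blocking argument forecloses. For the claim that $I$ contains every $2$-vertex, I would argue by the maximality reached after Round~2, noting that at that stage every member of $I$ is a $3^-$-vertex. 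Let $v$ be a $2$-vertex; it lies on the boundary, and by Observation~\ref{obs:NoConsecutive23-} both of its neighbors are $4^+$-vertices, so none of them is in $I$ after Round~2. If $v$ were not in $I$ at that stage, it would be a $3^-$-boundary vertex with no neighbor in $I$, hence addable, contradicting the maximality demanded in Round~2. Thus $v \in I$, and since later rounds only enlarge $I$, it remains there.

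The main obstacle I expect is the independence of the Round~1 set, since this is the one place where the structural observations must be combined: excluding adjacencies among the newly added vertices needs Observation~\ref{obs:NoConsecutive4} (to know they are $3^-$-vertices), the fact that a $2$-vertex has no chord, Observation~\ref{obs:No3-3Chord} (to exclude chords between two $3$-vertices), and a careful reading of the cyclic order on $B(G)$ to rule out consecutive added vertices. The remaining two properties are then essentially bookkeeping across the three rounds, anchored by the Round~1 blocking observation.
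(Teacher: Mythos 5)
Your proof is correct and follows essentially the same route as the paper's, combining Observations~\ref{obs:NoConsecutive4}, \ref{obs:No3-3Chord} and \ref{obs:NoConsecutive23-} with the fact that Round~1 already dominates every $4^+$-boundary vertex; you simply spell out the Round~1 independence in more detail. The only (harmless) deviation is that the paper puts each $2$-vertex into $I$ already in Round~1 (its counter-clockwise boundary neighbor is a $4^+$-vertex by Observation~\ref{obs:NoConsecutive23-}), whereas you derive the same membership from the maximality demanded in Round~2.
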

\begin{proof}
    By Observations \ref{obs:NoConsecutive4} and \ref{obs:No3-3Chord}, $I$ is an independent set after Round~1. It remains so, by construction, after the subsequent two rounds. By Observation~\ref{obs:NoConsecutive23-}, all the $2$-vertices in $B(G)$ are added to $I$ in Round~1. By Observation~\ref{obs:NoConsecutive4}, no $4^+$-vertices in $B(G)$ are added to $I$ in Round~1 and since they are all dominated by $I$ after Round~1, by construction, they are not added to $I$ in any of the two subsequent rounds.
\end{proof}

\begin{observation}                     \label{obs:CommonSecondneigbor}
    If $v \in  I$, then every pair of vertices in $N(v)$ has a second common neighbor in $G$.
\end{observation}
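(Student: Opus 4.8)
The plan is to fix a vertex $v \in I$ together with a pair $\{x,y\} \subseteq N(v)$ and to exhibit a common neighbour of $x$ and $y$ different from $v$. By Observation~\ref{obs:Ivertices} we have $d(v) \le 4$, and moreover $d(v) \le 3$ whenever $v$ lies on $B(G)$, since $I$ contains no $4^+$-vertex of the boundary. First I would record the local picture: as $G$ is a triangulated disk, the neighbours of an internal $v$ form a cycle (its \emph{link}), while the neighbours of a boundary $v$ form a path whose two endpoints are precisely the boundary-neighbours of $v$; two neighbours of $v$ are adjacent exactly when they are consecutive in this link.

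Next I would clear away the easy pairs. If $x$ and $y$ are non-adjacent, they are non-consecutive in the link, which (using $d(v)\le 4$) leaves only two possibilities: $v$ is internal of degree $4$ and $\{x,y\}$ is an antipodal pair, or $v$ is a boundary vertex of degree $3$ and $\{x,y\}$ are its two boundary-neighbours. In both cases the one or two link-vertices lying strictly between $x$ and $y$ are common neighbours of $x$ and $y$ other than $v$. If instead $x$ and $y$ are adjacent, I would examine the triangular faces on the edge $xy$: if $xy$ is internal it lies on two faces whose apexes are both common neighbours of $x$ and $y$, and since at most one of these apexes equals $v$ we are finished; if $xy$ is a boundary edge it lies on a single face, whose apex $z$ is a common neighbour of $x$ and $y$, and we are finished as soon as $z\ne v$. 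This reduces the whole statement to one stubborn configuration: $xy$ is a boundary edge whose unique incident face is the triangle $vxy$.

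In that configuration I would split on the position of $v$. When $v$ lies on the boundary, the endpoint of $xy$ joined to $v$ by a boundary edge has both of its boundary edges on the face $vxy$ and is therefore a $2$-vertex; as $v$ is then a $3^-$-vertex, this contradicts Observation~\ref{obs:NoConsecutive23-} (and in the degree-$2$ subcase one instead contradicts the fact, established inside the proof of Observation~\ref{obs:NoConsecutive23-}, that the edge joining the two neighbours of a $2$-vertex is internal). When $v$ is internal of degree $3$, its link is a triangle and the third link-vertex is already a common neighbour of $x$ and $y$ distinct from $v$, so there is nothing to prove.

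The main obstacle is the final case: $v$ internal of degree $4$ with $xy$ a boundary edge of apex $v$. Here $x$ and $y$ are adjacent boundary vertices, so Observation~\ref{obs:NoConsecutive4} forbids them from both being $4^+$-vertices, and neither is a $2$-vertex since each is adjacent to $v$ and to its two link-neighbours. Hence one of them, say $x$, is a $3$-vertex whose neighbourhood is exactly $v$ together with the two link-neighbours of $x$ in the $4$-cycle, and all three of these are neighbours of $v$. At this point I would appeal to Construction~\ref{const:I}: after Round~$2$ the set $I$ is a maximal independent set among the $3^-$-vertices of $B(G)$, so the $3^-$ boundary vertex $x$ is, already at that stage, either in $I$ or adjacent to a member of $I$; the dominating member must be among the three neighbours of $x$, each of which is a neighbour of $v$. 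Thus $v$ has a neighbour in $I$ before internal vertices are considered, so $v$ could not have been added in Round~$3$, contradicting $v\in I$. Tightening this deduction — verifying that the three neighbours of $x$ are genuinely neighbours of $v$, and that membership in $I$ is indeed forced at the end of Round~$2$ rather than only after Round~$3$ — is the step I expect to demand the most care.
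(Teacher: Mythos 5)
Your proposal is correct and, although you organize the cases by the adjacency type of the pair $\{x,y\}$ rather than by the type of $v$ as the paper does, it rests on exactly the same ingredients as the paper's proof: second faces on internal edges, the chord between the neighbours of a $2$-vertex, Observations~\ref{obs:NoConsecutive4} and~\ref{obs:NoConsecutive23-} for the boundary configurations, and the maximality of $I$ at the end of Round~2 to kill the internal degree-four vertex sitting over a boundary edge. The only nit is that your side remark that two neighbours of $v$ are adjacent \emph{exactly} when they are consecutive in the link is false in the ``only if'' direction (a non-consecutive pair may be joined by an edge outside the link), but you never invoke that direction, so the argument stands as written.
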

% We had earlier noted that there will be no boundary edges in $N(v)$. But that is not true if $G$ is a triangulation.
\begin{proof}
    Let $v$ be a $2$-vertex in $I$ with neighbors $u$ and $w$. Since $G$ is a triangular disk with at least $5$ vertices, $uw$ is a chord of $G$ and hence $\set{u,w}$ have a second common neighbor. 
    
    Let $v$ be a boundary $3$-vertex in $I$ with neighbors $u$, $v'$ and $w$, where $u$ and $w$ are in $B(G)$. Since $v$ has degree exactly three, $v'$ is a common neighbor of $u$ and $w$. If $uv'$ is a boundary edge of $G$, $u$ would be a $2$-vertex. By Observation~\ref{obs:Ivertices}, $u \in I$ in which case $v$ would not have been in $I$. Hence $uv'$ is an internal edge of $G$ and hence $\set{u, v'}$ has a common neighbor other than $v$. The case for $\set{v', w}$ is similar.
    
    Finally let $v$ be any $3$-degree or $4$-degree internal vertex in  $I$.  Let $u$ and $w$ be two distinct vertices in $N(v)$. If $u$ and $w$ are non-adjacent in $G$, then $v$ is a $4$-vertex, and hence both the remaining vertices in $N(v)$ are common neighbors of $\set{u, w}$. If $uw$ is an internal edge of $G$, then since $G$ is a near triangulation, $\set{u,w}$ has a common neighbor other than $v$. Suppose $uw$ is a boundary edge of $G$, then by Observation \ref{obs:NoConsecutive4} either $u$ or $w$ is a $3$-degree vertex. Without loss of generality, let $w$ be the $3$-degree vertex. Then the cyclically next boundary neighbor of $w$ other than $u$, say $x$ is also a neighbor of $v$. At least one vertex in $\{u,w,x\}$ will be included in $I$ by the end of Round~2. Hence $v$ would not have been added to $I$. 
    
    Since these are the only types of vertices in $I$ (Observation~\ref{obs:Ivertices}), the above cases are exhaustive. 
\end{proof}

\begin{remark}
    If $G$ is a triangulation of order at least $4$, Observation~\ref{obs:CommonSecondneigbor} can be directly established  for any $4^-$-vertex $v$, since every edge is part of two triangles. In that case, we can pick $I$ to be any maximal independent set of $4^-$-vertices in $G$ and skip all previous observations made in this section. This would have sufficed if our aim was limited to affirming Corollary~\ref{corr:GodHenTriangulation}.
\end{remark}

    Observation~\ref{obs:CommonSecondneigbor} leads us to a simple idea which helped us unlock this problem. 
    
\begin{observation}[Key Observation]                    \label{obs:Key}
    If there exists a two-coloring $f$ of $V(G)$ such that every vertex in $V(G) \setminus N(I)$ is satisfied, then $f$ can be modified to a two-coloring which satisfies every vertex in $G$.
\end{observation}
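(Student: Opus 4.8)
The plan is to keep $f$ unchanged on $V(G)\setminus I$ and to recolor only the vertices of $I$. First I would record two consequences of $I$ being independent (Observation~\ref{obs:Ivertices}): no vertex of $I$ has a neighbor in $I$, so $I\subseteq V(G)\setminus N(I)$ and every vertex of $I$ is already satisfied by $f$; and every neighbor of a vertex of $I$ lies in $N(I)$. Consequently, recoloring the vertices of $I$ changes the list of colors seen by a vertex $x$ only when $x\in N(I)$: a vertex in $V(G)\setminus N(I)$ (including every vertex of $I$ itself) keeps all of its neighbors' colors and therefore stays satisfied. Thus the whole problem reduces to choosing colors for the vertices of $I$ so that every $y\in N(I)$ sees both colors.

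Next I would analyze the neighborhood of a typical $y\in N(I)$. Since all $2$-vertices lie in $I$ (Observation~\ref{obs:Ivertices}), such a $y$ is a $3^+$-vertex, and because any two consecutive neighbors of $y$ bound a triangular face and are therefore adjacent, $I$ being independent forces $y$ to have at least one neighbor outside $I$, whose color is fixed by $f$. Hence either the fixed neighbors of $y$ already display both colors, in which case $y$ is satisfied no matter how we color $I$, or they are monochromatic of some color $c(y)$ and $y$ \emph{demands} that at least one of its $I$-neighbors receive the color $\overline{c(y)}$. The task is therefore to color $I$ so as to meet every demand simultaneously.

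The engine driving this is Observation~\ref{obs:CommonSecondneigbor}. Call a demanding vertex $y$ \emph{captive} to $v\in I$ if $v$ is its only $I$-neighbor; a captive vertex forces the color of $v$. I would first argue that these forced colors are consistent: if $y,y'$ are both captive to the same $v$, then the second common neighbor $z\ne v$ guaranteed by Observation~\ref{obs:CommonSecondneigbor} is a neighbor of a captive vertex and hence lies outside $I$, so its single fixed color equals both $c(y)$ and $c(y')$, forcing the two demands to agree. The same device shows that no $I$-neighbor of a demanding vertex $y$ can be forced to the \emph{wrong} color $c(y)$: were some $v\in N(y)\cap I$ forced by a captive $w$ to the color $c(y)$, the second common neighbor of $y$ and $w$ would have to be colored $\overline{c(y)}$ (as a fixed neighbor of $w$) and $c(y)$ (as a fixed neighbor of $y$) at once. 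Setting each $v\in I$ that has captive neighbors to their common demanded color therefore satisfies every captive vertex and leaves every other demanding vertex's $I$-neighbors free of wrong-color forcing.

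The main obstacle is the residual coupling: a demanding vertex all of whose $I$-neighbors are unforced still needs one of them colored correctly, and two such vertices can pull a shared free $I$-vertex in opposite directions. Here I would again invoke Observation~\ref{obs:CommonSecondneigbor}---when two neighbors of a free $v$ demand opposite colors, their second common neighbor must itself lie in $I$ and be adjacent to both, providing an alternative vertex through which each demand can be met---and then color the remaining free vertices of $I$ sequentially, using the planarity and independence of $I$ to show the induced system of positive constraints is always satisfiable. Turning this local slack into a globally consistent assignment is the delicate point; everything else is bookkeeping enabled by the reduction to $I$ and the no-conflict property above.
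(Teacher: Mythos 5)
Your reduction to recoloring only $I$ and your use of Observation~\ref{obs:CommonSecondneigbor} to rule out conflicting forced colors are both on the right track, but the proof is not closed: your last paragraph explicitly leaves open the step of showing that the system of constraints ``at least one $I$-neighbor of $y$ gets color $\overline{c(y)}$'' is simultaneously satisfiable, and the appeal to ``planarity and independence of $I$'' is not an argument. As you set it up --- a one-shot assignment to $I$ determined only by the \emph{fixed} colors on $V(G)\setminus I$ --- this residual problem is a genuine covering/SAT-type problem over the free vertices of $I$, and your own analysis shows why it is hard to control: when two demanding neighbors of a free $v$ want opposite colors, their second common neighbor lies in $I$, so Observation~\ref{obs:CommonSecondneigbor} hands you no fixed color to exploit and the conflicts can cascade across shared $I$-vertices.

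The paper sidesteps this entirely with a local-improvement argument that you are one step away from. Keep the current \emph{full} two-coloring $f$ (the vertices of $I$ already carry arbitrary colors), and for a vertex $v\in I$ with an unsatisfied neighbor, let $U$ be the set of neighbors of $v$ that are unsatisfied in $G\setminus v$ under the restriction of $f$. Any two vertices of $U$ have a second common neighbor $z\neq v$, and $z$ has a color under $f$ \emph{whether or not} $z\in I$; both vertices see that color, so both miss the same color $c$. Recoloring $v$ to $c$ satisfies all of $U$ and cannot unsatisfy anything (a neighbor of $v$ that was satisfied only through $v$'s color is, by definition, in $U$; vertices outside $N(v)$ are untouched), so the set of satisfied vertices strictly grows and the process terminates. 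The point you miss is that the consistency argument applies at once to \emph{all} currently-unsatisfied neighbors of a single $v$ --- not just the captive ones --- because the second common neighbor's \emph{current} color is usable even when that neighbor lies in $I$. This makes a greedy, one-vertex-at-a-time recoloring of $I$ work and dissolves the global consistency question you got stuck on.
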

\begin{proof}
     Suppose there is a vertex $v \in I$ such that a vertex in $N(v)$ is unsatisfied. Let $G_v = G \setminus \set{v}$ and $f_v$ be $f$ restricted to $V(G_v)$. Let $U \subset N(v)$ be the set of unsatisfied vertices in $N(v)$ under $f_v$. Note that this may contain vertices which were satisfied under $f$. If $\card{U} \geq 2$, by Observation~\ref{obs:CommonSecondneigbor}, each pair of vertices in $U$ have a common neighbor in $G_v$ and hence miss the same color under $f_v$ (the color different from that of the common neighbor). Since this is true for every pair in $U$, all the vertices in $U$ miss the same color, say $c$, under $f_v$. If $\card{U} = 1$, we choose $c$ to be the color missing for the single vertex $u \in U$ under $f_v$. 
     
     Recoloring $v$ to $c$ in $f$ gives a new two-coloring $f'$ of $V(G)$ which satisfies all the vertices in $U$ and all the vertices originally satisfied by $f$. We can repeat this procedure till no vertex in $I$ has an unsatisfied neighbor to get a $2$-coupon coloring of $G$.
\end{proof}
     
In view of Observation~\ref{obs:Key}, we can focus on finding a two-coloring of $G$ such that every vertex in $V' = V(G) \setminus N(I)$ is satisfied. The set $V'$ consists of two types of vertices - those in $I$ and those not dominated by $I$. Those in $I$ are all $4^-$-vertices and those not dominated by $I$ are $5^+$-vertices in $G$ and remain so even in $G \setminus I$. These two types of vertices pose different challenges which can be jointly addressed by a new type of coloring for a triangulation $G'$ which contains $G \setminus I$ as a spanning subgraph. First we construct the graph $G'$ and then define the new type of coloring.

\begin{construction}[$(G',P')$]                          \label{const:Gprime}
    Let $v_1, \ldots, v_k$ be an arbitrary order of vertices in $I$ and let $G_0 = G$ and $P_0 =  \emptyset$. For each $i \in [k]$, the graph $G_i$ is obtained from $G_{i-1}$ by deleting $v_i$ and adding any missing edge between the neighbors of $v_i$ in $G_{i-1}$ as long as they do not violate planarity. Add all the edges of $G_i$ between the neighbors of $v_i$ in $G_{i-1}$ to $P_{i-1}$ to obtain $P_i$. Let $(G', P') = (G_k, P_k)$. We call the edges in $P'$ and their endpoints \emph{protected} and the remaining vertices \emph{unprotected}.
\end{construction}

\begin{observation}                                     \label{obs:Unprotected}
    Every unprotected vertex in $G'$ is a $5^+$-vertex in $G'$.
\end{observation}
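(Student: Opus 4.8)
The plan is to pin down the unprotected vertices exactly: I claim they are precisely the vertices that lie outside $I$ and have no neighbor in $I$, and that every such vertex is an untouched $5^+$-vertex. The argument splits into (i) showing that \emph{every} vertex of $N(I)$ gets protected, so that an unprotected vertex can have no $I$-neighbor, and (ii) using the maximality of $I$ to force its degree up and observing that such a vertex keeps its $G$-degree inside $G'$.

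First I would record a stability fact for Construction~\ref{const:Gprime}. Since $I$ is independent (Observation~\ref{obs:Ivertices}), no neighbor of $v_i$ belongs to $I$; hence no neighbor of $v_i$ is deleted before step $i$, and as every fill-in edge joins two neighbors of an already-processed $v_j$ (none of which is $v_i$), no edge incident to $v_i$ is ever created. Thus $N_{G_{i-1}}(v_i) = N_G(v_i)$ for each $i$. The crucial step is then the following: every vertex of $N(I)$ is protected. Fix $v_i \in I$ and a neighbor $w \in N_G(v_i)$. Because $G$ is a triangulated disk, the edge $v_iw$ bounds at least one triangular face $\indsg{\set{v_i, w, w'}}$, so $w'$ is a second neighbor of $v_i$ with $ww' \in E(G)$. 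Both $w$ and $w'$ avoid $I$ (they are neighbors of $v_i \in I$), so neither is ever deleted and $ww'$ survives into $G_i$; being an edge of $G_i$ between two neighbors of $v_i$ in $G_{i-1}$, it is placed into $P_i$. Hence $w$ is an endpoint of a protected edge, i.e. $w$ is protected. The point I would stress is that $P_i$ absorbs \emph{all} edges of $G_i$ among $N_{G_{i-1}}(v_i)$ — not merely the freshly inserted ones — and this is exactly what makes every neighbor of a deleted vertex protected.

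Finally I would combine the two halves. An unprotected vertex $u$ survives into $G'$, so $u \notin I$; and by the previous step $u \notin N(I)$. Maximality of $I$ as an independent set of $4^-$-vertices (Observation~\ref{obs:Ivertices}) then forces $u$ to be a $5^+$-vertex of $G$: a $4^-$-vertex outside $I$ would have to have a neighbor in $I$. Moreover, since no neighbor of $u$ lies in $I$, none of $u$'s edges is deleted during the construction, and since $u \notin N_{G_{i-1}}(v_i)$ for every $i$, no fill-in edge is incident to $u$; therefore $\deg_{G'}(u) = \deg_G(u) \geq 5$, as required. The only real obstacle is reading the construction correctly — recognizing that protection is conferred by all edges among $N_{G_{i-1}}(v_i)$ and not just the added chords; once that is noticed, the full protection of $N(I)$ is immediate from the triangulated-disk structure and the degree bound reduces to the maximality of $I$.
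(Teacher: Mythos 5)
Your proof is correct and follows essentially the same route as the paper's: maximality of $I$ places every $4^-$-vertex in $I \cup N(I)$, vertices of $N(I)$ are protected, and an unprotected vertex keeps its $G$-degree because it has no neighbor in $I$ and $G'$ contains $G \setminus I$ as a subgraph. The only difference is that you spell out, via the triangular face on each edge $v_iw$, why every neighbor of a deleted vertex really does end up incident to a protected edge — a step the paper's proof asserts without elaboration.
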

\begin{proof}
    Since $I$ is a maximal independent set of $4^-$-vertices, every $4^-$-vertex $v$ is in $I \cup N(I)$. If $v \in I$ it gets deleted and if $v \in N(I)$, it gets protected. Hence every unprotected vertex $u$ in $G'$ is a $5^+$-vertex in $G$. Since $u \notin N(I)$, it remains a $5^+$-vertex in $G \setminus I$ and in $G'$ which is a supergraph of $G \setminus I$.
\end{proof}

\begin{observation}                                     \label{obs:GprimeTriangle}
    If $v \in I$ is a $3^+$-vertex in $G$, $\indsg{N_G(v)}$ contains a triangle $T_v$ in $G'$ and all the three edges of $T_v$ are in $P'$. If $v \in I$ is a $2$-vertex in $G$, the edge in $\indsg{N_G(v)}$ is in $P'$. 
\end{observation}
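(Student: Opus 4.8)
The plan is to analyze Construction~\ref{const:Gprime} by tracking what happens to the neighborhood of a single vertex $v \in I$ as it gets deleted. The key point is that, by Observation~\ref{obs:CommonSecondneigbor}, every pair of neighbors of $v$ in $G$ has a second common neighbor, so these neighbors have a rich adjacency structure that the construction is designed to preserve by adding chords. I would first isolate the moment in the iterative process when $v = v_i$ is deleted from $G_{i-1}$, and argue that at that stage the neighbors of $v$ in $G_{i-1}$ are exactly the neighbors of $v$ in $G$ (since $v \in I$ is never adjacent to another vertex of $I$, so none of its neighbors are deleted before $v$ itself, and edges added earlier only add to adjacencies, never remove $v$'s neighbors).

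First I would treat the $2$-vertex case, which is easy. If $v$ has neighbors $u, w$ in $G$, then since $G$ is a triangulated disk on at least $5$ vertices, $uw$ is a chord, so $u$ and $w$ are already adjacent in $G_{i-1}$. By the construction, every edge of $G_i$ between the neighbors of $v$ (here, the single edge $uw$) is added to $P_{i-1}$; hence $uw \in P'$, as claimed. Next, for the $3^+$-vertex case, I would use the local planar structure around $v$: the neighbors of $v$ lie on a cycle (the link of $v$), and consecutive neighbors on this link are joined by edges of the triangulated disk. When $v$ is deleted, the construction adds every missing chord among $N_G(v)$ that planarity permits. Since $v$ sat inside the region bounded by its link, after deleting $v$ that region can be triangulated by chords among the link vertices, and in particular three of the link vertices can be made pairwise adjacent, yielding a triangle $T_v$. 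Crucially, all three of these edges are edges of $G_i$ lying between neighbors of $v$, so by construction they are placed into $P_i \subseteq P'$.

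The main obstacle I anticipate is establishing the existence of the triangle $T_v$ with all three edges simultaneously present in $G_i$ (rather than merely pairwise common neighbors existing in $G$). The subtlety is that the construction only adds edges ``as long as they do not violate planarity,'' so I cannot simply assert that every chord among $N_G(v)$ appears. The right argument is that after removing $v$, its former neighbors bound a face (or region) that is a cycle of length $d_G(v) \geq 3$; any maximal planar set of chords inside a polygon triangulates it, and a triangulation of a polygon on at least three vertices certainly contains a triangle using three of its boundary vertices. I would verify the smallest case $d_G(v) = 3$ separately: here $N_G(v)$ already forms a triangle in $G$ (the three faces around $v$ force the three neighbors to be pairwise adjacent, using Observation~\ref{obs:CommonSecondneigbor} to confirm each pair is joined), and each of these edges is recorded in $P'$ when $v$ is processed. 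For $d_G(v) = 4$, the link is a $4$-cycle and adding one planar diagonal creates two triangles, at least one of which gives $T_v$ with all edges protected.

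I would close by noting that the edges asserted to lie in $P'$ are exactly those added to the $P_i$'s at step $i$, and that once an edge enters some $P_j$ it remains in $P'$ by the monotone (additive) nature of the construction. Thus both endpoints of each such edge, and the edges themselves, are protected, which is precisely the statement. The only care needed is the bookkeeping that the relevant edges are genuinely between neighbors of $v$ in $G_{i-1}$ and are present in $G_i$, so that the construction's rule ``add all edges of $G_i$ between the neighbors of $v_i$ to $P_i$'' captures them.
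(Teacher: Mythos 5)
Your overall strategy is the same as the paper's: freeze the step at which $v=v_i$ is deleted, note that independence of $I$ means none of $v$'s neighbors has been removed and no edge between them is ever removed, and then do a case analysis on $d_G(v)\in\{2,3,4\}$ using the link of $v$ and the face freed by deleting $v$. The $2$-vertex and internal-$4$-vertex cases are handled essentially as in the paper (the paper additionally notes that $K_5$-freeness of $G_{i-1}$ guarantees a missing diagonal that can be routed through the new $4$-face, but your version is fine).

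There is, however, a genuine gap in your degree-$3$ case. You assert that when $d_G(v)=3$ ``$N_G(v)$ already forms a triangle in $G$ (the three faces around $v$ force the three neighbors to be pairwise adjacent, using Observation~\ref{obs:CommonSecondneigbor} to confirm each pair is joined).'' This is only true for \emph{internal} $3$-vertices. Boundary $3$-vertices do occur in $I$ (Rounds~1 and~2 of Construction~\ref{const:I} put boundary $3^-$-vertices into $I$), and for such a vertex there are only two bounded faces at $v$: its link is the path $u$--$v'$--$w$, not a cycle, and the two boundary neighbors $u,w$ need not be adjacent in $G$ (e.g.\ the degree-$3$ path vertices of a fan). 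Your general ``$v$ sits inside the region bounded by its link, which is a cycle'' framing also silently assumes $v$ is internal, so it does not rescue this case. Moreover, Observation~\ref{obs:CommonSecondneigbor} only gives each pair of neighbors a second common \emph{neighbor}, not an edge between them, so it cannot be used to ``confirm each pair is joined.'' The paper closes exactly this case by observing that the missing edge $uw$ can be added through the \emph{outer} face of $G_{i-1}$ after $v$ is deleted, completing the path $u$--$v'$--$w$ to a triangle whose three edges all lie between neighbors of $v$ and are therefore placed in $P'$. Your proof needs this additional argument (or some substitute) to be complete.
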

\begin{proof}
    For every vertex $v_i \in I$, $N(v_i) \subset V(G')$ since $V(G') = V(G) \setminus I$ and $I$ is an independent set. If $v_i$ is an internal $3$-vertex of $G$, then $N_G(v_i)$ induces a triangle in $G$. If $v_i$ is a boundary $3$-vertex of $G$ and $N_{G_{i-1}}(v_i)$ induced only a $2$-length path, it can be completed to a triangle in $G_i$ by connecting the two boundary neighbors of $v_i$ through the outer face of $G_{i-1}$. Finally, if $v_i$ is an internal $4$-vertex, $N_{G_{i-1}}(v_i)$ contains at least one pair of non-adjacent vertices in $G_{i-1}$ (since $G_{i-1}$ is $K_5$-free) and they can be connected in $G_i$ by an edge through the $4$-face created by deleting $v_i$ from $G_{i-1}$. Notice that no edge between two vertices in $N_G(v_i)$, whether originally present in $G$ or added in one of the steps, gets deleted later since $I$ is an independent set. The required memberships in $P'$ follow from the construction.  
\end{proof}

\begin{definition}[Fair four-coloring]
    Given a graph $G$ and $P \subset E(G)$, a four-coloring of $V(G)$ is called a \emph{fair four-coloring} of $(G, P)$ if the endpoints of every edge in $P$ gets different colors and every vertex $v$ not spanned by $P$ sees at least three colors in $N(v)$.
\end{definition}

\begin{lemma}                                           \label{lem:fair-coloring}
     If $\Gamma$ is a planar graph and $P \subseteq E(\Gamma)$ spans all $4^-$-vertices in $\Gamma$, then $(\Gamma, P)$ has a fair four-coloring.
\end{lemma}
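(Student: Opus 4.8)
The plan is to bootstrap from the Four Color Theorem and then repair the coloring locally, exploiting that every vertex not spanned by $P$ has degree at least five (since $P$ spans all $4^-$-vertices, a vertex incident to no edge of $P$ must be a $5^+$-vertex) while it only needs to see three of the four available colors. First I would reduce to the case that $\Gamma$ is connected, colouring distinct components independently. Then, invoking the Four Color Theorem, $\Gamma$ has a proper four-colouring $c$. Since $P \subseteq E(\Gamma)$, the colouring $c$ is automatically proper on $P$, so the first requirement of a fair four-colouring already holds; the only possible obstruction to fairness is an \emph{unprotected} vertex $v$ (one incident to no edge of $P$) whose open neighbourhood carries at most two colours. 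Call such a vertex \emph{deficient}. The key point I would stress at this stage is that only the edges of $P$ are required to stay properly coloured, so across non-$P$ edges we are free to recolour at will; this slack, together with $d(v) \ge 5$, is what the repair exploits.

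Next I would set up the repair as a monovariant. Maintain a $P$-proper four-colouring and use as potential the number of satisfied unprotected vertices, seeking to increase it. Given a deficient $v$, its at least five neighbours occupy at most two colour classes, so by pigeonhole some colour, say $1$, is carried by at least three neighbours of $v$. I would recolour one such neighbour $u$ to a colour $\alpha \in \{3,4\}$ that is absent from $N(v)$, choosing $\alpha$ so as to respect the properness requirement on the (at most finitely many) edges of $P$ incident to $u$. Because at least two further neighbours of $v$ retain colour $1$, the neighbourhood of $v$ keeps its old colours and gains $\alpha$, so $v$ becomes satisfied; if $v$ originally saw only a single colour I would perform two such recolourings on two of its like-coloured neighbours, introducing two new colours at once.

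The hard part will be controlling cascades. Recolouring a shared neighbour $u$ can, in principle, turn some \emph{other} unprotected vertex $w$ adjacent to $u$ from satisfied to deficient, so the potential need not increase after a naive repair. The crux is therefore to always locate a \emph{safe} repair: a neighbour $u$ of $v$ and a target colour $\alpha$ whose reassignment satisfies $v$ while desatisfying no $w$. I would argue this by passing to a $P$-proper colouring that maximises the potential and showing that a deficient vertex must then admit a strictly improving safe recolouring, contradicting maximality; the ingredients are the two free choices of $\alpha$, the at least three candidate neighbours $u$, and the observation that $w$ is endangered only when $u$ is $w$'s \emph{unique} neighbour of its current colour and $\alpha$ already appears in $N(w)$, a ``critical'' situation that planarity together with the degree-$\ge 5$ bound on unprotected vertices should prevent from blocking every candidate at once. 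An alternative and perhaps cleaner route is a vertex-minimal-counterexample argument: show such a minimal $(\Gamma,P)$ is $2$-connected and exhibit a reducible configuration — for instance an unprotected vertex that can be deleted while a protected triangle is installed on three of its neighbours, recursing on a strictly smaller instance and recovering the three colours at $v$ from the rainbow triangle. Either way, reconciling the neighbourhood-colourfulness demands of distinct unprotected vertices is the step I expect to be the most delicate.
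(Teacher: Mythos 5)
Your proposal takes a genuinely different route from the paper (the paper never repairs a proper four-coloring; it first finds a max-cut $(A,B)$ subject to no protected edge crossing, so that every unprotected $5^+$-vertex has at least three neighbors across the cut, then triangulates those triples inside each side and properly four-colors $H_A$ and $H_B$ separately, so each unprotected vertex sees a rainbow triangle on the opposite side). Unfortunately, your route as written has a genuine gap: the local repair step is not shown to exist, and in fact it can fail outright. If $v$ is deficient with all five neighbors colored from $\{1,2\}$, a candidate neighbor $u$ of color $1$ may be incident to protected edges whose other endpoints already carry colors $2$, $3$ and $4$; then $u$ is frozen at color $1$ and \emph{neither} choice of $\alpha\in\{3,4\}$ is available. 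Nothing prevents every neighbor of $v$ from being frozen in this way, so there may be no single-vertex recoloring that helps $v$ at all, let alone a ``safe'' one. The sentence asserting that planarity and the degree bound ``should prevent'' all candidates from being blocked is precisely the missing proof; the maximal-potential argument needs a strict increase, and you have not exhibited one (a repair that satisfies $v$ while desatisfying exactly one other vertex leaves the potential unchanged, so even termination is unclear).

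The alternative reducible-configuration sketch also does not go through as stated: if you delete an unprotected vertex $v$ and install a protected triangle on three of its neighbors, any \emph{other} unprotected degree-$5$ neighbor of $v$ drops to degree four and is still unspanned by $P$, so the inductive hypothesis ``$P$ spans all $4^-$-vertices'' fails in the smaller instance. The paper's constrained max-cut sidesteps all of these interaction problems globally: protected edges never cross the cut (so $P$-properness is inherited from the proper colorings of $H_A$ and $H_B$ for free), and the rainbow triangles for distinct unprotected vertices are created simultaneously by a single application of the Four Color Theorem on each side, with no cascading to control. If you want to salvage a repair-based proof you would need Kempe-chain-style recolorings at distance two or more, which is a substantially harder analysis than what you have outlined.
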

\begin{proof}
     Borrowing the terminology from Construction~\ref{const:Gprime}, we call the edges in $P$ and their endpoints \emph{protected} and the remaining vertices \emph{unprotected}. Consider a cut $(A,B)$ in $\Gamma$ with maximum number of edges crossing the parts subject to the constraint that no protected edge crosses the cut. By the maximality of the cut, every unprotected vertex $v$ will have at least half of its neighbors in the opposite part. Otherwise, we can shift $v$ to the other part to get a larger cut without violating the constraint. Since all unprotected vertices in $G$ are $5^+$-vertices, they will have at least three neighbors on the other side. 
     
     We color $V(\Gamma)$ by coloring $A$ and $B$ independently, starting with $A$. Remove all the edges between the vertices in $B$. If there are more than three neighbors for any unprotected vertex $v \in B$, then arbitrarily delete some edges incident on $v$ until exactly three edges remain. Call the resulting subgraph $\Gamma'$. Pick a planar drawing of $\Gamma'$ and for each unprotected vertex $v \in B$, add any missing edge in the $A$-part between every pair of vertices in $N_{\Gamma'}(v)$. This can be done without violating planarity since each unprotected vertex in $B$ has only three neighbors in $\Gamma'$. Let us call this graph $H$ and the planar subgraph of $H$ induced on $A$ as $H_A$. By the four color theorem \cite{AppHak,AppHakKoc}, there exists a proper four-coloring $f_A$ of $H_A$. Repeat the same procedure to find a coloring $f_B$ of the vertices in $B$ and combine them to get a coloring $f$ of $\Gamma$.  
     
     For every unprotected vertex $v\in B$ (resp., $v \in A$), there will be a triangle induced in $H_A$ (resp., $H_B$) by the three neighbors of $v$ in $H$. Hence $N(v)$ will see 3 different colors in $f$. Since none of the edges in $P$ crossed the cut and since all of them where present in either $H_A$ or $H_B$, the endpoints of every protected edge gets different colors. Hence $f$ is a fair four-coloring of $(\Gamma, P)$.
\end{proof}

\begin{remark}
    The idea of using a max-cut (without any constraint) was used by B\'{e}rczi and G\'{a}bor \cite{bergab} to prove that triangulations with at most two vertices of degree at most four have a $2$-coupon coloring.
\end{remark}

By Lemma~\ref{lem:fair-coloring}, the pair $(G', P)$ obtained from Construction~\ref{const:Gprime} has a fair four-coloring $f$. We construct a two-coloring of $G$ from $f$ as follows.

\begin{construction}[$f_2$]                             \label{const:f2}
Consider $f$ as a partial four-coloring of $G$. Note that for each $2$-vertex $v \in I$, $N(v)$ contains a protected edge (Observation~\ref{obs:GprimeTriangle}) and hence $N(v)$ sees two colors under $f$. Group the four colors into two pairs so that for each $2$-vertex $v$ in $I$, the two colors on the neighbors of $v$ go to different pairs. This is indeed possible since we have at most two $2$-vertices in $G$. Now merge the two colors in a pair into a single color. Extend this partial two-coloring to a full two-coloring $f_2$ of $G$ by giving the vertices in $I$ any of the two colors arbitrarily. 
\end{construction}

\begin{observation}                                     \label{obs:f2}
    In the two-coloring $f_2$ obtained from Construction~\ref{const:f2}, every vertex in $V(G) \setminus N(I)$ is satisfied.
\end{observation}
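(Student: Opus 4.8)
The plan is to verify satisfaction separately for the two types of vertices comprising $V(G) \setminus N(I)$: the vertices of $I$ (all $4^-$-vertices) and the vertices lying outside $I \cup N(I)$ (the unprotected $5^+$-vertices). The common engine in both cases is a pigeonhole observation about $f_2$: since $f_2$ is obtained from $f$ by partitioning the four colors into two pairs and then merging each pair, any vertex whose neighborhood sees at least three of the four colors of $f$ must meet both pairs, and hence sees both colors of $f_2$. I would isolate this one-line fact first, because every case reduces to it.

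First I would handle the unprotected vertices $u \in V(G) \setminus (I \cup N(I))$. Such a $u$ is indeed unprotected in $(G',P')$: every protected edge joins two neighbors of some deleted vertex, and $u$, being adjacent to no vertex of $I$, is an endpoint of no such edge. For the same reason $u$ acquires no new edge in Construction~\ref{const:Gprime}, and since none of its edges are deleted either (none lead into $I$), we get $N_{G'}(u) = N_G(u)$. Because $u$ is unprotected, the fair four-coloring $f$ guarantees it sees at least three colors in $N_{G'}(u) = N_G(u)$, and the pigeonhole observation then shows $u$ is satisfied in $f_2$.

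Next I would handle the vertices $v \in I$. If $v$ is a $3^+$-vertex in $G$, Observation~\ref{obs:GprimeTriangle} supplies a triangle $T_v$ inside $N_G(v)$ all three of whose edges are protected; since endpoints of protected edges receive distinct colors under the fair coloring $f$, the three vertices of $T_v$ carry three distinct colors, so $N_G(v)$ sees at least three colors and $v$ is satisfied in $f_2$ by the pigeonhole observation. If $v$ is a $2$-vertex, Observation~\ref{obs:GprimeTriangle} tells us its two neighbors are joined by a protected edge and hence receive distinct colors under $f$; Construction~\ref{const:f2} was arranged precisely so that these two colors fall into different pairs, so the two neighbors receive different colors in $f_2$ and $v$ is again satisfied. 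These exhaust the vertex types in $I$ (Observation~\ref{obs:Ivertices}), completing the argument.

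I do not anticipate a serious obstacle, since once the relevant three-color guarantee (or, for the $2$-vertices, the two-colors-in-different-pairs guarantee) is in hand, each case collapses to the pigeonhole observation. The one place that needs genuine care is the identity $N_{G'}(u) = N_G(u)$ for unprotected $u$: I must check both that no neighbor of $u$ is deleted (true because $u \notin N(I)$) and that no edge is inserted at $u$ during Construction~\ref{const:Gprime} (true because inserted edges join only neighbors of deleted vertices, none of which is $u$). Everything else is bookkeeping against Observation~\ref{obs:GprimeTriangle} and the defining properties of the fair four-coloring.
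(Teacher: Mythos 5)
Your proposal is correct and follows essentially the same case analysis as the paper's proof: unprotected vertices via the three-color guarantee of the fair coloring, $3^+$-vertices of $I$ via the protected triangle $T_v$, and $2$-vertices of $I$ via the deliberate pairing of colors in Construction~\ref{const:f2}. The only difference is that you spell out the verification that $N_{G'}(u) = N_G(u)$ for unprotected $u$, a point the paper leaves implicit; this is a welcome bit of extra care but not a different argument.
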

\begin{proof}
    Recall that $V(G) \setminus N(I)$ consists of two types of vertices - the unprotected vertices in $G'$ and the vertices in $I$. If $v$ is an unprotected vertex,  since $f$ is a fair four-coloring, $N(v)$ contains vertices of at least three colors under $f$ and hence two colors under $f_2$. If $v$ is a $2$-vertex in $I$, then it is satisfied due to the careful merging of colors in Construction~\ref{const:f2}. If $v$ is a $3^+$-vertex in $I$, then $N(v)$ in $G'$ contains a triangle $T_v$, all of whose edges are protected (Observation~\ref{obs:GprimeTriangle}). Hence $N(v)$ sees at least three colors under $f$ and hence two colors under $f_2$. 
\end{proof}

Observation~\ref{obs:f2} says that the $f_2$ satisfies the premise of Observation~\ref{obs:Key} and hence we can conclude that $f_2$ can be modified to a two-coloring which satisfies every vertex in $G$. Hence $G$ is good. This completes the proof of Theorem~\ref{thm:NearCouponColouring} and equivalently Theorem~\ref{thm:General}.

\begin{remark}
    It should be clear by now that key role played by the proper four-coloring of $H_A$ and $H_B$ is in ensuring that no triangle is monochromatic after the merger into a two-coloring. The existence of such two-colorings can be proved without resorting to the four-color theorem (c.f. Kaiser and {\v{S}}krekovski, 2004 \cite{kaiser2004planar}, Thomassen, 2008 \cite{thomassen20082}). Perhaps the easiest way (due to Barnette) is to use a stronger version of Petersen's theorem which asserts that every edge of a bridgeless cubic multigraph is contained in a $1$-factor (Sch{\"o}nberger, 1934 \cite{schonberger1934beweis}). Hence we could have bypassed the use of four-color theorem if we did not have to handle the two $2$-vertices. In particular, we can prove Corollary~\ref{corr:GodHenTriangulation} without using the four-color theorem. 
\end{remark}

\section{Concluding Remarks}

    Our proof of Theorem~\ref{thm:NearCouponColouring} lends itself to a polynomial-time algorithm. Notice that even though finding a max-cut is NP-hard for general graphs, it is polynomial-time solvable for planar graphs. Moreover, we do not even need a max-cut for our purpose. As soon as we get a cut which cannot be improved by shifting one unprotected vertex to the opposite part, we are done. This can be done greedily and will terminate in at most as many rounds as the number of unprotected edges. Four coloring of a planar graph can be done in quadratic time \cite{robertson1996efficiently}. 

    While we were able to affirm two of the conjectures in \cite{god}, we could not solve a tantalizing strengthening which states that the vertex set of a triangulation $G$ with at least four vertices can be partitioned into two total dominating sets - both of which induce a bipartite subgraph of $G$. Equivalently, there exists a proper four-coloring with color classes $\set{V_1, V_2, V_3, V_4}$ such that     both $V_1 \cup V_2$ and $V_3 \cup V_4$ are total dominating sets (Conjecture~32, \cite{god}). Our method seems to be limited in power when we need a proper coloring.
    
    %A $(2,2)$-coloring of a graph $G$ is a proper four-coloring of $G$ with color classes $\set{V_1, V_2, V_3, V_4}$ such that both $V_1 \cup V_2$ and $V_3 \cup V_4$ are acyclic sets. It is known that a triangulation $G$ is $(2,2)$-colorable if and only if the dual of $G$ is Hamiltonian \cite{stein1971b}. As noted in \cite{god} this implies that $(2,2)$-colorable triangulations satisfy this conjecture.
    
    Since we have affirmed two conjectures in this paper, we wish to restore the balance by posing two of our own. The first one stems out of the key technique we used in our proof and the second one comes out of our attempts to refute the original conjecture which we ended up proving.

\begin{conjecture}                                     \label{conj:4color}
    Every near-triangulation has a four-coloring of its vertices such that every vertex $v$ sees at least $\min\{d(v), 3\}$ different colors in $N(v)$.
\end{conjecture}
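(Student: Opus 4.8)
The plan is to reuse the apparatus built for Theorem~\ref{thm:NearCouponColouring}, since the conjecture asks for a single four-coloring that delivers, at \emph{every} vertex, the color count that the fair four-coloring of Lemma~\ref{lem:fair-coloring} guaranteed only at the unprotected $5^+$-vertices. First I would reduce to $2$-connected near-triangulations as in Observation~\ref{obs:2Connected}, coloring the pieces on either side of a cut-vertex or bridge independently; a four-coloring carries enough slack --- we may permute the colors on one piece --- to reconcile the low-degree vertices created at the cut. On a triangulated disk I would then build, in the spirit of Construction~\ref{const:Gprime}, a triangulation $(G',P')$ by deleting a maximal independent set $I$ of $4^-$-vertices and retriangulating each vacated region, so that the neighborhood of every deleted $3^+$-vertex retains a protected triangle and that of every deleted $2$-vertex a protected edge (Observation~\ref{obs:GprimeTriangle}).

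A fair four-coloring $f$ of $(G',P')$, extended to $G$ by coloring the vertices of $I$ last, already meets most of the demand. Each deleted $3^+$-vertex sees three colors because its protected triangle is rainbow under $f$, each deleted $2$-vertex sees two colors through its protected edge, and every unprotected (hence $5^+$) vertex sees three colors by the definition of a fair coloring. I would first push Lemma~\ref{lem:fair-coloring} a little further so that \emph{every} $5^+$-vertex, protected or not, sees three colors: such a vertex still has at least three neighbors across a maximum cut, so the triangle-insertion step, performed among three of its neighbors in the opposite part, applies to it unchanged and planarity is preserved exactly as before. This narrows the difficulty to a single residual class --- the $2$-, $3$-, and $4$-vertices lying in $N(I)\setminus I$.

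The hard part will be exactly these low-degree \emph{protected} vertices. A $3$- or $4$-vertex $u \in N(I)$ is never deleted, so it inherits no protected triangle of its own, and the cut-and-four-color engine offers it no guarantee, as it has too few neighbors on the far side of any cut. The natural repair, echoing Observation~\ref{obs:Key}, is to use the freedom left in coloring $I$ and recolor a neighbor $v \in I$ of $u$ so as to hand $u$ a color it is missing. The obstruction is that the device which made this work for two colors breaks for four: there, Observation~\ref{obs:CommonSecondneigbor} forced all unsatisfied neighbors of $v$ to miss the \emph{same} color, so one recoloring of $v$ cured them all at once, whereas with four colors different neighbors of $v$ may fall short of different colors, and a single choice for $v$ cannot satisfy them simultaneously --- and may even drop some other neighbor from three visible colors to two. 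Converting this one-shot repair into a globally consistent scheme --- say by arranging the recolorings of $I$ along disjoint Kempe chains, or by selecting $I$ so that every low-degree vertex of $N(I)$ is forced rainbow outright --- is where the genuine work lies and is the step I expect to be the main obstacle.
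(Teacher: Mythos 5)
This statement is posed in the paper as an open conjecture (Conjecture~\ref{conj:4color}); the paper offers no proof of it, and indeed remarks that the authors could not establish stronger colorability statements of this kind. So there is nothing in the paper to match your argument against, and your proposal must stand on its own --- which it does not, as you yourself acknowledge in the final paragraph. The unresolved step you flag is exactly the crux: the $3$- and $4$-vertices in $N(I)\setminus I$ receive no guarantee from the fair four-coloring, and the repair mechanism of Observation~\ref{obs:Key} collapses once four colors are in play, because Observation~\ref{obs:CommonSecondneigbor} only forces unsatisfied neighbors of $v\in I$ to agree on a missing color in a \emph{two}-coloring; with four colors, distinct neighbors of $v$ can be short of distinct colors, and recoloring $v$ can even destroy the rainbow property at a previously satisfied neighbor. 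Gesturing at Kempe chains or a cleverer choice of $I$ does not close this; no construction is given, and it is precisely here that the conjecture remains open.

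There is also a second, quieter gap earlier in your sketch. You claim Lemma~\ref{lem:fair-coloring} can be ``pushed a little further'' so that every $5^+$-vertex, protected or not, sees three colors. But the max-cut argument in that lemma only applies to \emph{unprotected} vertices: a protected vertex $v$ cannot be freely shifted across the cut, since doing so may force one of its protected edges to cross, violating the constraint under which the cut was maximized. Hence a protected $5^+$-vertex need not have three neighbors on the opposite side, and the triangle-insertion step is not available to it ``unchanged.'' Any serious attack on the conjecture would have to rework the cut argument (or replace it) to handle protected high-degree vertices as well as the low-degree ones in $N(I)\setminus I$.
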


Conjecture~\ref{conj:4color} will immediately give Theorem~\ref{thm:NearCouponColouring} via the color merger argument we used in Construction~\ref{const:f2}. We anticipate that, if proven, this may find many more applications than Theorem~\ref{thm:NearCouponColouring}. Some of the earlier attempts to settle the Goddard-Henning conjecture on certain classes of triangulations can be modified to affirm Conjecture~\ref{conj:4color} for those classes. For example, once can see that triangulations with acyclic chromatic number at most four (this includes triangulations with all vertex degrees odd) satisfy Conjecture~\ref{conj:4color}. 

\begin{conjecture}                                     \label{conj:allplanar}
    If $G$ is a planar graph with minimum degree at least three, then $d_t(G) \geq 2$.
\end{conjecture}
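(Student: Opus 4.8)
The plan is to push the machinery of Theorem~\ref{thm:NearCouponColouring} as far as it will go and then isolate the one place where near-triangularity was essential. As before I would argue by contradiction, fixing a counterexample $G$ that minimizes $\card{V}+\card{E}$ among planar graphs with $\delta(G)\ge 3$ admitting no $2$-coupon coloring. The reductions of Observations~\ref{obs:2Connected} through~\ref{obs:No3-3Chord} that only manipulate the boundary and use flipping --- disconnectedness, bridges, cut vertices, and deletion of an edge between two high-degree vertices --- never invoked the triangulation structure and should carry over, letting me assume $G$ is $2$-connected and reasonably ``dense''. A pleasant simplification here is that, since $\delta(G)\ge 3$, there are no $2$-vertices and the entire apparatus of the special set $S$ becomes vacuous.

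The coloring pipeline then consists of Construction~\ref{const:Gprime}, the fair four-coloring of Lemma~\ref{lem:fair-coloring}, the merger of Construction~\ref{const:f2}, and the recoloring of the Key Observation~\ref{obs:Key}. Crucially, Lemma~\ref{lem:fair-coloring} is already stated for \emph{arbitrary} planar graphs, so it transfers unchanged; and with no $2$-vertices the merger becomes trivial, since any grouping of the four colors into two pairs sends a vertex that sees three colors to one that sees both pairs. The single fragile link is Observation~\ref{obs:CommonSecondneigbor}: that for $v\in I$ every pair of neighbors of $v$ has a \emph{second} common neighbor. This is exactly what forces, in the Key Observation, all unsatisfied neighbors of $v$ to miss the \emph{same} color, so that recoloring $v$ alone repairs them all at once. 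In a general planar graph the neighborhood of a $3$-vertex can be an independent set with no shared neighbors, the property fails outright, and this is where I expect the proof to break.

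To repair the recoloring step I would first triangulate the bounded faces of $G$ to obtain a near-triangulation $\widehat{G}$, recording the set $A$ of added edges, and then run the whole pipeline on $\widehat{G}$ while insisting on the extra invariant that every vertex be satisfied using only its \emph{original} $G$-neighbors. The danger is precisely that a $2$-coupon coloring of $\widehat{G}$ is weaker than one of $G$, because satisfaction may be supplied by an edge of $A$; so the work is to choose $I$ and the max-cut of Lemma~\ref{lem:fair-coloring} to respect $E(G)$ and then to re-establish a weakened Key Observation, in which the unsatisfied $G$-neighbors of a deleted $v\in I$ are fixed not by recoloring $v$ alone but by a bounded \emph{local} recoloring inside $N_G(v)\cup\set{v}$, exploiting the planar rotation around $v$ to control how many distinct ``missing colors'' can coexist.

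The step I expect to be genuinely hard --- and the reason this is posed as Conjecture~\ref{conj:allplanar} rather than proved --- is making such local repairs globally consistent: recoloring a vertex to satisfy one neighbor can unsatisfy another, and without a common second neighbor there is no canonical target color to aim at. A cleaner route I would pursue in parallel is to establish the natural extension of Conjecture~\ref{conj:4color} to \emph{all} planar graphs, namely a four-coloring in which every vertex sees $\min\set{d(v),3}$ colors in its neighborhood. By the (now trivial) merger of Construction~\ref{const:f2} this instantly yields Conjecture~\ref{conj:allplanar}, and it isolates the entire difficulty into a single self-contained coloring statement about planar graphs, on which a discharging argument stands the best chance.
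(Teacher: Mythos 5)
This statement is not proved in the paper: Conjecture~\ref{conj:allplanar} is posed as an open problem, and the authors only note (in the concluding remarks) that the fair-four-coloring argument of Lemma~\ref{lem:fair-coloring} settles it for planar graphs of minimum degree at least five and for bipartite planar graphs. So there is no proof in the paper to compare yours against, and your proposal does not supply one either. To your credit, your diagnosis of where the paper's machinery breaks is accurate and essentially matches the authors' own implicit assessment: Lemma~\ref{lem:fair-coloring} and the merger of Construction~\ref{const:f2} transfer to all planar graphs (with $\delta\ge 3$ the set $S$ and the careful pairing of colors are indeed vacuous), and the sole obstacle is the treatment of $3$- and $4$-vertices via the independent set $I$, which hinges on Observation~\ref{obs:CommonSecondneigbor}. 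Without a second common neighbor for every pair in $N(v)$, the Key Observation~\ref{obs:Key} fails: two unsatisfied neighbors of a deleted $v\in I$ may miss \emph{different} colors, and no single recoloring of $v$ repairs both.

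However, neither of your two proposed repairs is carried out, and both have concrete unfixed holes. Triangulating $G$ to $\widehat{G}$ and running the pipeline there only yields satisfaction with respect to $\widehat{G}$-neighborhoods; the fair four-coloring guarantees an unprotected vertex sees three colors among its $\widehat{G}$-neighbors, and you give no mechanism forcing those three witnesses to lie among its $G$-neighbors (the max-cut argument shifts a vertex based on its total degree, so restricting attention to $E(G)$ would require unprotected vertices to have at least five $G$-neighbors, i.e.\ exactly the $\delta\ge 5$ case the paper already covers). The ``bounded local recoloring inside $N_G(v)\cup\{v\}$'' is stated as an aspiration with no invariant that prevents cascading: recoloring a neighbor $u$ of $v$ can unsatisfy vertices far from $v$, and you acknowledge you cannot control this. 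Your fallback --- proving the extension of Conjecture~\ref{conj:4color} to all planar graphs --- is a strictly stronger open statement, so it cannot serve as a step in a proof. In short, you have correctly located the difficulty but not resolved it; the statement remains, as the paper presents it, a conjecture.
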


A look in to the coloring part in the proof of Lemma~\ref{lem:fair-coloring} will show that if $G$ is a planar graph which has a cut such that every vertex $v$ has at least three neighbors in the opposite part, then $(G, \emptyset)$ has a fair four-coloring and hence $G$ has a $2$-coupon coloring. This suffices to confirm Conjecture \ref{conj:allplanar} for all planar graphs with minimum degree at least five and all bipartite planar graphs.

\bibliographystyle{ams}
\bibliography{bibtex}  % list here all the bibliographies that
\end{document}